\documentclass[12pt]{article}

\usepackage{amsmath, amsthm, amssymb}
\usepackage{graphicx}
\usepackage{psfrag}
\usepackage{epsfig}
\usepackage{epstopdf}
\usepackage{color}

\if 1 = 0
%%%%%%These are Benedetto's Preambles%%%%%%%
%\input{epsf.sty}
\usepackage{graphicx}
\usepackage[dvips]{graphicx}
\usepackage{color}
%%%%%%%%End of Preambles%%%%%
\fi

\usepackage{bm}
\usepackage{tikz}
\usepackage{multirow}
\usetikzlibrary{arrows,backgrounds,snakes}

\topmargin-.5in \textheight9in \oddsidemargin0in \textwidth6.5in

\newtheoremstyle{plainNoItalics}{}{}{\normalfont}{}{\bfseries}{.}{ }{}

\theoremstyle{plain}
\newtheorem{thm}{Theorem}[section]

\theoremstyle{plainNoItalics}

\newtheorem{defn}[thm]{Definition}
\newtheorem{rem}[thm]{Remark}
\newtheorem{prop}[thm]{Proposition}

\newcommand{\beq}{\begin{equation}}
\newcommand{\eeq}{\end{equation}}
\newcommand{\beqa}{\begin{eqnarray}}
\newcommand{\eeqa}{\end{eqnarray}}
\newcommand{\bit}{\begin{itemize}}
\newcommand{\eit}{\end{itemize}}
\newcommand{\bedef}{\begin{defn}}
\newcommand{\edefn}{\end{defn}}
\newcommand{\bpro}{\begin{prop}}
\newcommand{\epro}{\end{prop}}

 %% He2bing4lie4

%\graphicspath{{./}{./figs/}}

% the paper begins here

\begin{document}

\baselineskip=1.8pc

%\vspace*{.10in}

%=============  title  =========================
\begin{center}
{\bf
Runge-Kutta Discontinuous Galerkin Method for Traffic Flow Model on Networks
}
\end{center}

\vspace{.2in}
\centerline{
Suncica Canic
\footnote{Department of Mathematics, University of Houston,
Houston, TX, 77204. E-mail: canic@math.uh.edu.
The research of the first author is partially supported by NSF under grants DMS-1263572,  DMS-1318763,  DMS-1311709,
DMS-1262385, and DMS-1109189.
},
Benedetto Piccoli
\footnote{Department of Mathematics and Center for Computational and Integrative Biology,
Rutgers University - Camden, Camden, NJ, 08102. E-mail: piccoli@camden.rutgers.edu.
The research of the second author is partially supported by NSF under grant DMS-1107444.
},
Jing-Mei Qiu \footnote{Department of Mathematics, University of Houston,
Houston, TX, 77204. E-mail: jingqiu@math.uh.edu.
The research of the third and the fourth author is partially supported by Air Force Office of Scientific Computing YIP grant FA9550-12-0318, NSF grant DMS-1217008 and University of Houston.},
Tan Ren \footnote{School of Aerospace Engineering, Beijing Institute of Technology, Beijing, 100081. E-mail: rentanx@gmail.com
%The fourth author is supported by ... for visiting the University of Houston.
}
}

\bigskip
\noindent
{\bf Abstract.}
We propose a bound-preserving Runge-Kutta (RK) discontinuous Galerkin (DG) method as an efficient, effective and compact numerical approach  for numerical simulation of traffic flow problems on networks, with arbitrary high order accuracy.
Road networks are modeled by graphs, composed of a finite number of roads that meet at junctions. On each road, a scalar conservation law describes
the dynamics, while coupling conditions are specified at
junctions to define flow separation or convergence at the points where roads meet. We incorporate such coupling conditions in the RK DG framework, and apply an arbitrary high order bound preserving limiter to the RK DG method to preserve the physical bounds on the network solutions (car density). We showcase the proposed algorithm
on several benchmark test cases from the literature, as well as several new challenging examples with rich solution structures.
{Modeling and simulation of Cauchy problems for traffic flows on networks is notorious for lack of uniqueness or (Lipschitz) 
continuous dependence. The discontinuous Galerkin method proposed here deals elegantly with these problems, and is perhaps
the only realistic and efficient high-order method for network problems.}

\vfill

\noindent {\bf Keywords:}
Scalar conservation laws; Traffic flow; Hyperbolic network; Discontinuous Galerkin; Bound Preserving.

\newpage

\newpage

\section{Introduction}
\label{sec1}
\setcounter{equation}{0}
\setcounter{figure}{0}
\setcounter{table}{0}

In this paper we deal with vehicular traffic models on networks.
More precisely, we focus on the classical Lighthill-Whitham-Richards model
(see~\cite{LighthillWhitham1955aa,Richards1956aa}),
which consists of a single conservation laws for the car density.
The model describes the evolution of traffic load on a single road,
assuming that the average velocity depends only on the density via
a closure relation. The resulting density-flow function is usually
called {\em fundamental diagram} in engineering literature.
Such model was adapted to networks in a number of different ways~\cite{HoldenRisebro1995aa,LebacqueKhoshyaran2004aa,CocliteGaravelloPiccoli2005aa} 
depending on the rules used to describe the dynamics at junctions.
The only conservation of cars is not sufficient to isolate
a unique dynamics, thus additional rules, such as traffic distribution matrices,
are to be prescribed. 
In particular, various authors proposed a set of additional rules
which isolate a unique solution for every Riemann problem at a junction, i.e. a Cauchy
problem with initial density constant on each road. In that case the map providing a unique
solution to Riemann problems is called Riemann solver.
A fairly general theory for such models on networks is now available, see~\cite{GaravelloPiccoli2006ab,MR2566716}.

It is interesting to notice that lack of continuous dependence (or uniqueness)
may indeed happen for Cauchy problems even if we do have unique solutions to Riemann problems.
More precisely, the phenomenon of lack of Lipschitz continuous dependence is illustrated
in Section 5.4 of the book \cite{GaravelloPiccoli2006ab}.
However, some Riemann solvers do provide Lipschitz continuous dependence for Cauchy problems
(and thus also uniqueness). Examples can be found in
\cite{MR2262939}, \cite{GaravelloPiccoli2006ab} (Chapter 9) and \cite{MR2566716}.
The specific solvers considered in this paper fall in this category, except for the case
of two incoming and two outgoing roads (for which Lipschitz continuous dependence is false
but uniqueness and continuous dependence are still open problems.)

Due to limitations of the single conservation law to describe dynamics
in case of congestion, various models consisting of two equations
(conservation of car mass and balance of ``momentum")
have been proposed, see e.g.~\cite{AwRascle2000aa,ColomboGoatin2006aa}.
Numerical methods for conservation laws on networks were developed
mainly based on first order schemes, see~\cite{bretti2006numerical,CutoloPiccoliRarita2011,HertyKlar2003aa}.
First order schemes on networks have the same limitations as when they are applied to problems defined on a single real line:
weak solutions are not well approximated, unless the spatial mesh is very fine to resolve solution structures. 
For this reason, we propose to use Discontinuous Galerkin methods with arbitrary high-order
accuracy, which will be adapted in this paper to graph domains. Adaptation to graph problems requires supplementing the classical 
DG method with coupling conditions that hold at graph's vertices. We propose the use of Runge-Kutta DG methods with total variation bounded limiters as a straight forward way of implementing the coupling conditions, while preserving the upper and lower bounds of DG solutions with a bound preserving limiter.

Since the late 80s, DG methods have been gaining great popularity as methods of choice for solving systems of hyperbolic conservation laws, with high order accuracy for smooth solutions and good shock capturing capabilities. We refer the reader to review papers and books~\cite{cockburn2001runge, cockburn2000development} for the history, development, and applications of the methods. 
%Difficulties involved in simulating hyperbolic equations include the presence of shocks/discontinuities for the exact solution, and to capture the unique physically relevant entropy solution among weak solutions. It is known that the entropy solution can be captured by monotone schemes. However, monotone schemes are known to be of first order accurate only for smooth solutions and display a smear approximation to discontinuities. DG methods were proposed as a systematic way of generalizing the first order monotone schemes to be of arbitrary high order accurate in space. For scalar convex conservation laws, semi-discrete DG methods have been proved to satisfy cell-entropy inequalities, thus convergent to entropy solutions among weak solutions, if converge. 
The high order accuracy in time evolution is realized by applying the strong stability preserving (SPP) Runge-Kutta (RK) time discretization via the method-of-line approach. Compared with the existing high order finite volume and finite difference schemes, DG methods are more flexible with general meshes and local approximations, hence more suitable for h-p adaptivity. They are very compact in the sense that the update of the solution on one element only depends on direct neighboring elements, thus allowing for easy handling of various boundary conditions with high order accuracy and great parallel efficiency. Compared with the classical continuous finite element methods, DG methods are advantageous in capturing solutions with discontinuities or sharp gradients for convection dominant problems. 

We propose to use the high order RK DG method with total variation bounded limiters as a general approach for simulating hyperbolic network problems. The compactness of the DG method enables a straightforward way of implementing coupling conditions at junctions. The bound preserving property of a first order monotone scheme for our traffic flow model on networks is theoretically proved, thanks to the rule of maximizing fluxes at junctions. Such property enables the application of bound preserving limiters, while maintaining classical high order accuracy of the RK DG method. Numerical results on benchmark problems from the literature, as well as on the ones with rich solution structures that we constructed in this paper, showcase the effectiveness of the proposed approach. 
We emphasize that, to our best knowledge, the DG method perhaps is the only realistic and efficient high order method for network problems. Existing high order finite difference and finite volume schemes would involve a wide and one-sided stencil in reconstructing solutions at junctions; such one-sided reconstruction stencil would lead to potential accuracy and stability issues. This paper is an initial step in applying the DG method to network problems; further development of the method to nonlinear hyperbolic systems with additional challenges in resolving junction conditions and numerical stability will be subject to future investigation.

The paper is organized as follows. Section~\ref{sec2} is on the background of traffic flow models on networks, with a general description of coupling conditions at junctions. Section~\ref{sec3} presents the proposed high order Runge-Kutta discontinuous Galerkin method for network problems with bound preserving properties. Section~\ref{sec4} demonstrates the performance of the proposed schemes on benchmark test problems from the literature and in challenging test cases with rich solution structures. Finally, a conclusion is given in Section~\ref{conclusion}.

\section{{Background on traffic flow models on networks}}
\label{sec2}
\setcounter{equation}{0}
\setcounter{figure}{0}
\setcounter{table}{0}

The nonlinear traffic model based on conservation of cars is a scalar hyperbolic conservation law in the form of
\begin{equation}
 \partial_t\rho+\partial_x f(\rho)=0,
 \label{trafficeq}
\end{equation}
where $\rho=\rho(t,x)\in [0,\rho_{max}]$ is the density of cars, with $\rho_{max}$ being the maximum density of cars on the road; $f(\rho)=\rho v(t,x)$ is the flux. The main assumption of this model
is that the average velocity $v$ is a function depending only on the density $\rho$,
thus giving rise to~(\ref{trafficeq}). The usual assumptions on
$f$ is that $f(0)=f(\rho_{max})=0$ and that $f$ is strictly concave,
thus has a unique maximum point $\sigma$ called the critical density.
Indeed, below $\sigma$ the traffic is said to be in free flow
and the flux $f$ is an increasing function of the density. On the other side,
above $\sigma$ the flux is a decreasing function of the density,
representing congestion.\\
For future use, we define:
\begin{defn}
Let $\tau :\left[ 0,\rho _{\max }\right] \rightarrow \left[ 0,\rho
_{\max }\right] $ be the map such that $f\left( \tau \left( \rho
\right) \right) =f\left( \rho \right) $ for every $\rho \in \left[
0,\rho _{\max }\right] ,$ and $\tau \left( \rho \right) \neq \rho
$ for every $\rho \in \left[ 0,\rho _{\max }\right] \backslash
\left\{ \sigma \right\} .$
\end{defn}

A network is described by a topological graph, i.e. a couple $\left(
\mathcal{I},\mathcal{J}\right) , $ where $\mathcal{I=}\left\{
I_{i}:i=1,...,N\right\} $ is a collection of intervals representing roads,
and  $\mathcal{J}$ is a collection of vertices representing the junctions.
For a fixed junction $J$, a Riemann Problem (RP) is a Cauchy Problem with initial data which are constant on each 
road incident at the junction.
The evolution on the whole network of the solution to~(\ref{trafficeq})
is determined once one assigns a Riemann Solver at each junction, i.e. a map
assigning a solution to every Riemann Problem at the junction.
More precisely, given initial conditions $(\rho_{i,0}, \rho_{j,0})$, where $i$
ranges over incoming roads and $j$ over outgoing ones, we will assign
density values $(\widehat\rho_{i}, \widehat\rho_{j})$ so that the solution
on the incoming road $i$ is given by a single wave 
$(\rho_{i,0},\widehat\rho_{i})$, and on the outgoing road $j$
by the single wave $(\widehat \rho_{j},\rho_{j,0})$.

We consider the Riemann Solver based on the following rules:
\begin{description}
\item[(A)] There exists traffic distribution coefficients 
$\alpha_{ji}\in ]0,1[$, representing the portion of traffic
from incoming road $i$ going to outgoing road $j$.
The resulting traffic distribution matrix:
\begin{equation*}
A=\left\{ \alpha _{ji}\right\} _{j=n+1,...,n+m,\text{
}i=1,...,n}\in \mathbb{R}^{m\times n},
\end{equation*}
is row stochastic, i.e. for every $i$ it holds:
\begin{equation*}
\sum_{j}\alpha _{ji}=1.
\end{equation*}%
\item[(B)] Respecting (A), drivers behave so as to maximize the
flux through the junction. In other words the sum of the flux over
incoming roads is maximized.
\end{description}
If $n>m$ a yielding rule, (C), is needed. 
\begin{description}
\item[(C)] For example, consider the case of two incoming roads $a$ and $b$ and one outgoing road $c$. Assume that not all cars can enter the road $c$, and let
$Q$ be the amount that can do it. Then, $q Q$ cars come from the road
$a$ and $\left( 1-q \right) Q$ cars from the road $b$.
\end{description}

Now we describe the solutions generated at junctions using rules
(A), (B) and (C). \\
Notice that solving a Riemann Problem at a junction is equivalent
to solving Initial Boundary Value Problems (IBVP) on each road.
Since solutions to IBVP may not attain the boundary values,
due to the nonlinearity of the equation, one has to impose
admissible values on each road, which generate only waves
with negative speed on incoming roads and positive on outgoing ones.
Indeed, if waves would enter the junction, then the solution to the
IBVP may not attain the prescribed boundary value and, for instance,
even violate conservation of cars, see also~\cite{GaravelloPiccoli2006ab}.
In turn, this allows to state the problem in terms of fluxes,
since densities can be reconstructed due to these restrictions.
%We explain this fact below in details for the case of a simple junction
%with two incoming and one outgoint road.
Moreover, we have some bounds on maximal flows on each road,
more precisely we have:
\begin{prop}\label{prop:flux-limit}
Let $\left( \rho _{1,0},\rho _{2,0},...,\rho
_{n+m,0}\right) $ be the initial densities of a RP at  $J$ and
$\gamma _{i }^{\max },$ $i =1,...,n$ and $\gamma
_{j}^{\max },$ $j =n+1,...,n+m$ be the maximum fluxes that
can be obtained on incoming roads and outgoing roads,
respectively. Then:
\begin{equation}
\gamma _{i }^{\max }=\left\{
\begin{tabular}{ll}
$f\left( \rho _{i ,0}\right) ,$ & if $\rho _{i ,0}\in
\left[
0,\sigma \right] ,$ \\
$f\left( \sigma \right) ,$ & if $\rho _{i ,0}\in \left]
\sigma ,\rho _{\max } \right] ,$
\end{tabular}
\right. i =1,..,n,  \label{fluxUno}
\end{equation}
\begin{equation}
\gamma _{j }^{\max }=\left\{
\begin{tabular}{ll}
$f\left( \sigma \right) ,$ & if $\rho _{j ,0}\in \left[
0,\sigma \right] ,
$ \\
$f\left( \rho _{j,0}\right) ,$ & if $\rho _{j ,0}\in \left]
\sigma ,\rho _{\max } \right] ,$
\end{tabular}
\right. j =n+1,..,n+m.  \label{fluxDue}
\end{equation}
In particular, densities can be recontructed by flows at the junction.
\end{prop}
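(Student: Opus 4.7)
The plan is to reduce the problem to an Initial Boundary Value Problem on each road and use the characterization of admissible boundary values. Fix an incoming road $i$, viewing the junction as the right endpoint, and let $A_i(\rho_{i,0})$ denote the set of values $\widehat\rho_i$ such that the Riemann problem with left state $\rho_{i,0}$ and right state $\widehat\rho_i$ generates only waves of non-positive speed. A case analysis using concavity of $f$, the Rankine--Hugoniot condition, and the Lax entropy condition yields
\begin{equation*}
A_i(\rho_{i,0}) = \begin{cases} \{\rho_{i,0}\} \cup [\tau(\rho_{i,0}),\rho_{\max}], & \rho_{i,0}\in[0,\sigma],\\ [\sigma,\rho_{\max}], & \rho_{i,0}\in(\sigma,\rho_{\max}].\end{cases}
\end{equation*}
Indeed, when $\rho_{i,0}\in[0,\sigma]$ either the density is unchanged, or a stationary/backward shock to some $\widehat\rho_i\geq\tau(\rho_{i,0})$ is admitted; when $\rho_{i,0}\in(\sigma,\rho_{\max}]$ one allows a backward rarefaction down to $\sigma$, or a shock back up into the congested branch, covering the entire interval $[\sigma,\rho_{\max}]$.

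The first assertion then follows by maximizing $f$ over $A_i(\rho_{i,0})$. In the free-flow case, $f(\rho_{i,0})=f(\tau(\rho_{i,0}))$ and on $[\tau(\rho_{i,0}),\rho_{\max}]\subset[\sigma,\rho_{\max}]$ the flux $f$ is decreasing, so the maximum equals $f(\rho_{i,0})$, giving~(\ref{fluxUno}). In the congested case, the maximum of $f$ over $[\sigma,\rho_{\max}]$ is the global maximum $f(\sigma)$, again giving~(\ref{fluxUno}). A symmetric argument, this time demanding waves of non-negative speed on the outgoing road and with roles of free-flow and congested regimes swapped, yields the admissible set
\begin{equation*}
A_j(\rho_{j,0}) = \begin{cases} [0,\sigma], & \rho_{j,0}\in[0,\sigma],\\ \{\rho_{j,0}\}\cup[0,\tau(\rho_{j,0})], & \rho_{j,0}\in(\sigma,\rho_{\max}],\end{cases}
\end{equation*}
and maximizing $f$ on these sets produces~(\ref{fluxDue}).

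For the final claim on reconstruction, note that within each of the admissible sets $A_i$, $A_j$ the map $\widehat\rho\mapsto f(\widehat\rho)$ has a canonical inverse: the admissible set decomposes into a ``same-branch'' piece (containing the initial datum $\rho_{i,0}$ or $\rho_{j,0}$) and a ``crossed-branch'' piece living entirely on the other side of $\sigma$. Depending on whether a prescribed admissible flux value $\widehat\gamma$ at the junction equals the initial flux $f(\rho_{\cdot,0})$ or not, the convention that no wave is emitted unless necessary selects a unique preimage in $A_i$ (respectively $A_j$), since $f$ is strictly monotone on each of $[0,\sigma]$ and $[\sigma,\rho_{\max}]$. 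The main subtlety, and the step that requires the most care, is precisely this branch-selection argument at the boundary value $\widehat\gamma = f(\rho_{\cdot,0})$, where both $\rho_{\cdot,0}$ and $\tau(\rho_{\cdot,0})$ realize the same flux; the admissibility restriction together with the no-spurious-wave convention singles out the correct $\widehat\rho$.
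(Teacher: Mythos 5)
Your argument follows the same route as the paper's proof: reduce to the IBVP on each road, characterize the admissible trace values $\widehat\rho$ as those producing a single wave of negative (resp.\ positive) speed on incoming (resp.\ outgoing) roads, maximize $f$ over that set, and use strict monotonicity of $f$ on each branch to recover the density from the flux. Your treatment is correct and, if anything, slightly more explicit than the paper's on the branch-selection issue at $\widehat\gamma = f(\rho_{\cdot,0})$, which the paper sidesteps by taking the half-open interval $\left]\tau(\rho_{i,0}),\rho_{\max}\right]$ so that the stationary shock to $\tau(\rho_{i,0})$ is excluded from the admissible set.
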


\begin{proof}
Consider first an incoming road $i$ and indicate by $\widehat\rho_i$
the trace at the junction for positive times. Then the IBVP on road
$i$ is solved by a single wave:  $\left( \rho _{i,0}, \widehat{\rho }_{i}\right) $,
which must have negative speed.
If $\rho_{i,0}\in \left[ 0,\sigma \right] ,$ then $\widehat{\rho }_{i}$
either is $\rho _{i,0}$ or belongs to $\left] \tau \left( \rho
_{i,0}\right) ,1\right] .$ In the first case, there is no wave,
while in the second case the wave $\left( \rho
_{i,0},\widehat{\rho }_{i}\right) $ is a shock wave with negative
speed, see Figure~\ref{Figure theo4Uno} (left).
Therefore the maximal flux is given by $f(\rho_{i,0})$.
Moreover, there exists a unique value of $\widehat\rho_i$,
which is compatible with a given value of the flux
in the interval $[0,f(\rho_{i,0})]$.

If, instead, $\rho _{i,0}\in \left[ \sigma ,1\right],$ then
$\widehat{\rho } _{i}\in \left[ \sigma ,1\right] $ and the wave
$\left( \rho _{i,0},\widehat{ \rho }_{i}\right) $ is a rarefaction
or a shock wave with negative speed, see Figure~\ref{Figure theo4Uno} (right). 
In this case the maximal flux is given by $f(\sigma)$ and, again,
there exists a unique value of $\widehat\rho_i$,
which is compatible with a given value of the flux
in the interval $[0,f(\sigma)]$.

For an outgoing road, the analysis is analogous, see Figure~\ref{Figure theo4Due} . 
\end{proof}

\begin{figure}[tbph]
\includegraphics[height=1.9in, width=6.2in
]{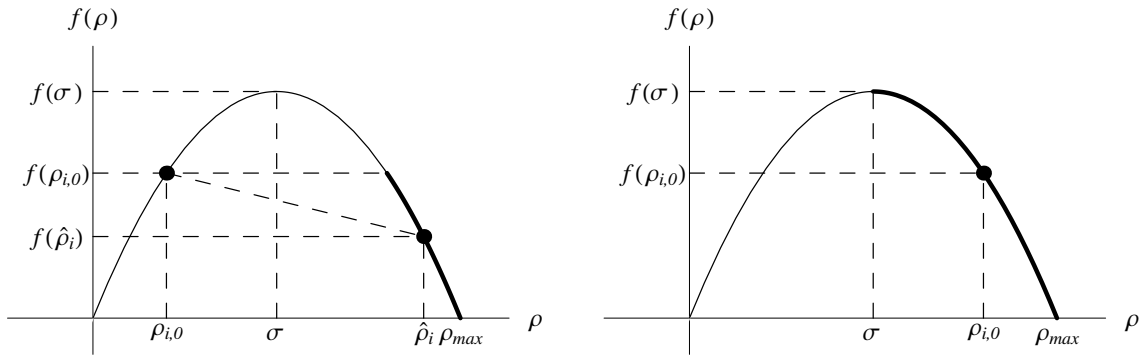} \caption{Images of Riemann solvers for the incoming
roads.} \label{Figure theo4Uno}
\end{figure}

\begin{figure}[tbph]
\includegraphics[height=1.9in, width=6.2in
]{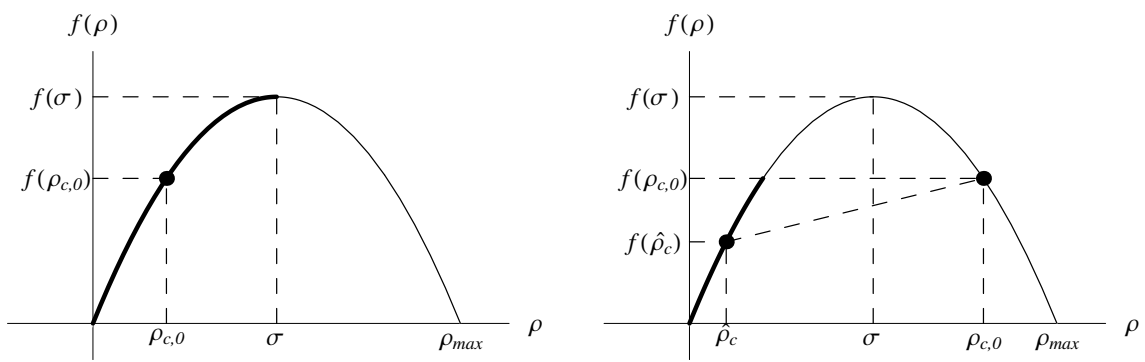} \caption{Images of Riemann solvers for the
outgoing road.} \label{Figure theo4Due}
\end{figure}

Proposition~\ref{prop:flux-limit} allows to restate rules (A), (B)
and (C) as a Linear Programming problem in terms of the incoming fluxes
$\widehat\gamma_i=f(\widehat\rho_i)$. Indeed, rule (A) allows to determine
the outgoing fluxes $\widehat\gamma_j=f(\widehat\rho_j)$ in terms of the incoming ones.
Then rule (B) provides a linear functional in the fluxes $\widehat\gamma_i$
to be maximized. The constraints are given by the formulas~(\ref{fluxUno}) and~(\ref{fluxDue}). Rule (C) allows to choose a unique
solution to the Linear Programming problem in case
of more incoming than outgoing roads.

In the following sections we will explicitely solve the Riemann Problems
in the following cases: junctions of type $2\times 1$ (two incoming roads and one outgoing road), junctions of type $1\times 2$ (one incoming road and two outgoing roads), and junctions of type $2\times 2$ (two incoming roads and two outgoing roads).  We refer the reader to~\cite{GaravelloPiccoli2006ab} for a complete description of the general case.

%%%%%%%%%%%%%%%%%%%%%%%%
\subsection{The case of $n$ = 2 incoming roads and $m$ = 1 outgoing road}
\label{sec: 2.1}
%%%%%%%%%%%%%%%%%%%%%%%%

Let us consider the junction with two incoming roads
$a$ and $b$ and one outgoing road $c$.
Given initial data $\left( \rho _{a,0},\rho _{b,0},\rho
_{c,0}\right) $ we construct a solution in the following way. 
To maximize the through traffic (rule (B)), we set:
\[
\widehat{\gamma }_{c}=\min \left\{ \gamma _{a}^{\max }+\gamma
_{b}^{\max },\gamma _{c}^{\max }\right\} ,
\]
where $\gamma _{i}^{\max },$ $i=a,b,$ is defined as in~(\ref{fluxUno}) and $%
\gamma _{c}^{\max }$ as in~(\ref{fluxDue}). Notice that in this
case the matrix $A$ (or rule (A)) is simply given by the column
vector $(1,1)$, thus it gives no additional restriction.\\
Consider now the space $\left( \gamma _{a},\gamma _{b}\right) $ and the line:
\begin{equation}
\gamma _{b}=\frac{1-q}{q}\gamma _{a},  \label{primaRetta}
\end{equation}
defined according to rule (C).   Let $P$ be the point
of intersection of the line~(\ref{primaRetta}) with the line
$\gamma _{a}+\gamma _{b}=\widehat{\gamma }_{c}.$ The final fluxes
must belong to the region%
\[
\Omega =\left\{ \left( \gamma _{a},\gamma _{b}\right) :0\leq
\gamma _{i}\leq
\gamma _{i}^{\max },\text{ }0\leq \gamma _{a}+\gamma _{b}\leq \widehat{%
\gamma }_{c},\text{ }i=a,b\right\} .
\]%
There are two different cases:
\begin{enumerate}
\item $P$ belongs to $\Omega ;$

\item $P$ does not belong to $\Omega .$
\end{enumerate}
The two cases are represented in Figure~\ref{Riemann1}.
\begin{figure}[tbph]
\begin{center}
\includegraphics{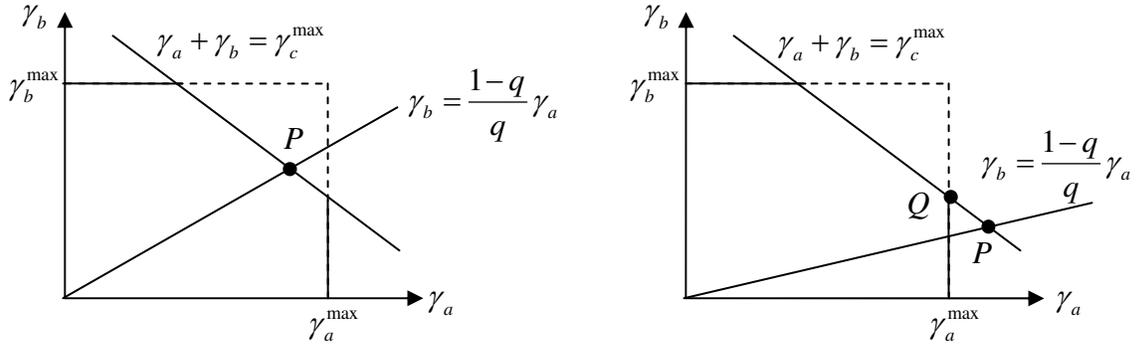}
\end{center}
\caption{The cases 1) and 2).  \label{Riemann1}}
\end{figure}
In the first case, we set $\left( \widehat{\gamma }_{a},\widehat{\gamma }%
_{b}\right) =P,$ while in the second case we set $\left( \widehat{\gamma }%
_{a},\widehat{\gamma }_{b}\right) =Q,$ where $Q$ is the point of
$\Omega \cap \left\{ \left( \gamma _{a},\gamma _{b}\right) :\text{
}\gamma
_{a}+\gamma _{b}=\widehat{\gamma }_{c}\right\} $ closest to the line~(\ref{primaRetta}).  Once we have determined $\widehat{\gamma }_{a}$
and $\widehat{\gamma }_{b}$ (and $\widehat{\gamma }_{c}$), we can
find in a unique way $\widehat{\rho }_{i}$, $i\in \left\{
a,b,c\right\} $: 
\begin{thm}\label{theo:rho-values}
Consider a junction $J$ with $n=2$ incoming roads and $m=1$
outgoing road. For every $\rho _{a,0},\rho _{b,0},\rho _{c,0}\in
\left[ 0,\rho _{\max }\right],$ there exists a unique admissible
weak solution $\rho =\left( \rho _{a},\rho _{b},\rho _{c}\right) $
at the junction $J$, satisfying rules (A), (B) and
(C), such that
\[
\rho _{a}\left( 0,\cdot \right) \equiv \rho _{a,0},\text{ }\rho
_{b}\left( 0,\cdot \right) \equiv \rho _{b,0},\text{ }\rho
_{c}\left( 0,\cdot \right) \equiv \rho _{c,0}.
\]
Moreover, there exists a unique $3-$tuple $\left( \widehat{\rho
}_{a}, \widehat{\rho }_{b},\widehat{\rho }_{c}\right) \in \left[
0,\rho _{\max }\right] ^{3}$ such that
\[
\widehat{\rho }_{i}\in \left\{
\begin{tabular}{ll}
$\left\{ \rho _{i,0}\right\} \cup \left] \tau \left( \rho
_{i,0}\right) ,\rho _{\max }
\right] ,$ & if $0\leq \rho _{i,0}\leq \sigma ,$ \\
$\left[ \sigma ,\rho _{\max }\right] ,$ & if $\sigma \leq \rho
_{i,0}\leq \rho _{\max },$
\end{tabular}
\right. i=a,b,
\]
and
\[
\widehat{\rho }_{c}\in \left\{
\begin{tabular}{ll}
$\left[ 0,\sigma \right] ,$ & if $0\leq \rho _{c,0}\leq \sigma ,$ \\
$\left\{ \rho _{c,0}\right\} \cup \left[ 0,\tau \left( \rho
_{c,0}\right) \right[ ,$ & if $\sigma \leq \rho _{c,0}\leq \rho
_{\max },$
\end{tabular}
\right.
\]
and for $i\in \left\{ a,b\right\} $, the solution is given by the
wave $\left( \rho _{i,0},\widehat{\rho }_{i}\right) $, while for
the outgoing road the solution is given by the wave $\left(
\widehat{\rho }_{c},\rho _{c,0}\right) .$
\end{thm}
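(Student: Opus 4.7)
The plan is to recast the problem in terms of fluxes at the junction (as the discussion preceding the theorem suggests), use the geometric construction already introduced to produce a unique triple $(\widehat\gamma_a,\widehat\gamma_b,\widehat\gamma_c)$, then invoke Proposition~\ref{prop:flux-limit} to recover the densities $(\widehat\rho_a,\widehat\rho_b,\widehat\rho_c)$ uniquely within the prescribed sets, and finally verify that the resulting waves are admissible weak solutions on each road.

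First, I would note that rule (A) is vacuous here (the distribution matrix is the column $(1,1)$), so conservation of cars at $J$ reduces to $\widehat\gamma_a+\widehat\gamma_b=\widehat\gamma_c$. By Proposition~\ref{prop:flux-limit}, the admissible outgoing flux can be at most $\gamma_c^{\max}$ and the admissible incoming fluxes at most $\gamma_a^{\max}$ and $\gamma_b^{\max}$, so rule (B) forces
\[
\widehat\gamma_c=\min\{\gamma_a^{\max}+\gamma_b^{\max},\gamma_c^{\max}\}.
\]
This makes $\widehat\gamma_c$ unique; uniqueness of $(\widehat\gamma_a,\widehat\gamma_b)$ then has to be extracted from rule (C). I would introduce the admissible region $\Omega$ and the priority line~(\ref{primaRetta}) as in the text and split into the two cases displayed in Figure~\ref{Riemann1}. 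In case 1, the intersection point $P$ of the priority line with $\{\gamma_a+\gamma_b=\widehat\gamma_c\}$ already lies in $\Omega$, and the assignment $(\widehat\gamma_a,\widehat\gamma_b)=P$ is clearly the only choice satisfying (B), (C) simultaneously. In case 2, $P$ violates one of the bounds $\gamma_i\le\gamma_i^{\max}$; the segment $\Omega\cap\{\gamma_a+\gamma_b=\widehat\gamma_c\}$ is non-empty by the choice of $\widehat\gamma_c$, and its point $Q$ closest to the priority line is unique because the segment is a convex compact set and the distance to a line is a strictly convex function along a transverse segment. This proves uniqueness of the fluxes.

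Next, for each $i\in\{a,b,c\}$ I would apply Proposition~\ref{prop:flux-limit}: given the value $\widehat\gamma_i\in[0,\gamma_i^{\max}]$ there is a unique admissible boundary density $\widehat\rho_i$ producing only waves of the correct sign, namely shocks of negative speed (or no wave) on the incoming roads and rarefactions/shocks of positive speed (or no wave) on the outgoing road. A case analysis on the position of $\rho_{i,0}$ relative to $\sigma$ — exactly as in the proof of Proposition~\ref{prop:flux-limit} — shows that $\widehat\rho_i$ lies precisely in the sets listed in the statement: for an incoming road, either $\widehat\rho_i=\rho_{i,0}$ (when no wave is emitted) or $\widehat\rho_i\in\,]\tau(\rho_{i,0}),\rho_{\max}]$ when $\rho_{i,0}\le\sigma$, and $\widehat\rho_i\in[\sigma,\rho_{\max}]$ when $\rho_{i,0}\ge\sigma$; the outgoing case is symmetric.

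Finally, I would check admissibility of the assembled weak solution: on each incoming road the wave $(\rho_{i,0},\widehat\rho_i)$ has non-positive speed, on the outgoing road the wave $(\widehat\rho_c,\rho_{c,0})$ has non-negative speed, and conservation $\widehat\gamma_a+\widehat\gamma_b=\widehat\gamma_c$ holds by construction, yielding an admissible weak solution at $J$ in the sense of the standard definition for networks (see \cite{GaravelloPiccoli2006ab}). The main obstacle, as I see it, is the uniqueness argument in case 2: one must make sure that the point $Q$ is genuinely uniquely defined and that no other flux pair in $\Omega$ can satisfy (B) better or (C) equally well. Once the geometry on the triangle $\Omega$ is unpacked, however, this reduces to a short convexity argument, and the rest of the proof is the bookkeeping inherited from Proposition~\ref{prop:flux-limit}.
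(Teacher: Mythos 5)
Your proposal follows essentially the same route as the paper, which proves this theorem implicitly through the construction preceding the statement (the choice of $\widehat\gamma_c$ by rule (B), the region $\Omega$ with the points $P$ and $Q$, and the recovery of densities from fluxes via Proposition~\ref{prop:flux-limit}). Your added detail on the uniqueness of $Q$ and the admissibility check is consistent with that construction, so the argument is correct and not a genuinely different approach.
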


%\begin{proof}
%Consider first an incoming road, where the generated wave
%$\left( \rho _{i,0}, \widehat{\rho }_{i}\right) $
%must have negative speed.
%If $\rho
%_{i,0}\in \left[ 0,\sigma \right] ,$ then $\widehat{\rho }_{i}$
%either is $\rho _{i,0}$ or belongs to $\left] \tau \left( \rho
%_{i,0}\right) ,1\right] .$ In the first case, there is no wave,
%while in the second case the wave $\left( \rho
%_{i,0},\widehat{\rho }_{i}\right) $ is a shock wave with negative
%speed, see Figure \ref{Figure theo4Uno} (left).
%If, instead, $\rho _{i,0}\in \left[ \sigma ,1\right],$ then
%$\widehat{\rho } _{i}\in \left[ \sigma ,1\right] $ and the wave
%$\left( \rho _{i,0},\widehat{ \rho }_{i}\right) $ is a rarefaction
%or a shock wave with negative speed, see Figure \ref{Figure theo4Uno} (right). \\
%For the outgoing road $c$, the analysis is analogous; see Figure
%\ref{Figure theo4Due} .
%\end{proof}
%
%
%\begin{figure}[tbph]
%\includegraphics[height=1.9in, width=6.2in
%]{figure/theo4Uno.eps} \caption{Images of Riemann solvers in incoming
%roads.} \label{Figure theo4Uno}
%\end{figure}
%
%\begin{figure}[tbph]
%\includegraphics[height=1.9in, width=6.2in
%]{figure/theo4Due.eps} \caption{Images of Riemann solvers for the
%outgoing road.} \label{Figure theo4Due}
%\end{figure}

%%%%%%%%%%%%%%%%%%%%%%%%%%%%%
\subsection{The case of $n$ = 1 incoming road and $m$ = 2 outgoing roads}
\label{sec: 2.2}

%%%%%%%%%%%%%%%%%%%%%%%%%%%%%%%%%%
Let us now consider the junction with the incoming road $a$
and two outgoing roads $b$ and $c$. 
The distribution matrix $A$, of rule (A), takes the form%
\[
A=\left(
\begin{tabular}{c}
$\alpha $ \\
$1-\alpha $
\end{tabular}
\right) ,
\]
where $\alpha \in \left] 0,1\right[ $ and $\left( 1-\alpha \right)
$ indicate the percentage of cars which, from road $a$, goes to
roads $b$ and $c $, respectively. Thanks to rule (B), the solution
to a RP is:
\begin{equation*}
\widehat{\gamma }=\left( \widehat{\gamma }_{a},\widehat{\gamma
}_{b}, \widehat{\gamma }_{c}\right) =\left( \widehat{\gamma
}_{a},\alpha \widehat{ \gamma }_{a},\left( 1-\alpha \right)
\widehat{\gamma }_{a}\right) ,
\end{equation*}
where
\begin{equation*}
\widehat{\gamma }_{a}=\min \left\{ \gamma _{a}^{\max
},\frac{\gamma _{b}^{\max }}{\alpha },\frac{\gamma _{c}^{\max
}}{1-\alpha }\right\} .
\end{equation*}
Once we have obtained $\widehat{\gamma }_{a},$ $\widehat{\gamma
}_{b}$ and $ \widehat{\gamma }_{c}$, it is possible to find in a
unique way $\widehat{ \rho }_{i}$, $i\in \left\{ a,b,c\right\} $,
reasoning as in the proof of Theorem~\ref{theo:rho-values}.
Then we obtain the following:
\begin{thm}
Consider a junction $J$ with $n=1$ incoming road and $m=2$
outgoing roads. For every $\rho _{a,0},\rho _{b,0},\rho _{c,0}\in
\left[ 0,\rho _{\max }\right] ,$ there exists a unique admissible
weak solution $\rho =\left( \rho _{a},\rho _{b},\rho _{c}\right) $
at the junction $J$, respecting rules (A) and (B),
such that
\[
\rho _{a}\left( 0,\cdot \right) \equiv \rho _{a,0},\text{ }\rho
_{b}\left( 0,\cdot \right) \equiv \rho _{b,0},\text{ }\rho
_{c}\left( 0,\cdot \right) \equiv \rho _{c,0}.
\]
Moreover, there exists a unique $3-$tuple $\left( \widehat{\rho
}_{a}, \widehat{\rho }_{b},\widehat{\rho }_{c}\right) \in \left[
0,\rho _{\max }\right] ^{3}$ such that
\[
\widehat{\rho }_{a}\in \left\{
\begin{tabular}{ll}
$\left\{ \rho _{a,0}\right\} \cup \left] \tau \left( \rho
_{a,0}\right) ,\rho _{\max }
\right] ,$ & if $0\leq \rho _{a,0}\leq \sigma ,$ \\
$\left[ \sigma ,\rho _{\max }\right] ,$ & if $\sigma \leq \rho
_{a,0}\leq \rho _{\max },$
\end{tabular}
\right.
\]
and
\[
\widehat{\rho }_{j}\in \left\{
\begin{tabular}{ll}
$\left[ 0,\sigma \right] ,$ & if $0\leq \rho _{j,0}\leq \sigma ,$ \\
$\left\{ \rho _{j,0}\right\} \cup \left[ 0,\tau \left( \rho
_{j,0}\right) \right[ ,$ & if $\sigma \leq \rho _{j,0}\leq \rho
_{\max },$
\end{tabular}
\right. j=b,c,
\]
and for the incoming road the solution is given by the wave
$\left( \rho _{a,0},\widehat{\rho }_{a}\right) $, while for
$j=b,c,$ the solution is given by the wave $\left( \widehat{\rho
}_{j},\rho _{j,0}\right) .$
\end{thm}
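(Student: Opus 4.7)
The plan is to follow the same template used for the $2\times 1$ junction in Theorem~\ref{theo:rho-values}, adapting it to the asymmetry of rule (A) in the $1\times 2$ case. First I would translate the problem from densities to fluxes using Proposition~\ref{prop:flux-limit}: set $\gamma_a^{\max}$ from~(\ref{fluxUno}) and $\gamma_b^{\max},\gamma_c^{\max}$ from~(\ref{fluxDue}). Rule (A) then forces the linear relations $\widehat\gamma_b=\alpha\widehat\gamma_a$ and $\widehat\gamma_c=(1-\alpha)\widehat\gamma_a$, so only the scalar $\widehat\gamma_a\in[0,\gamma_a^{\max}]$ remains free, and the feasibility constraints $\widehat\gamma_b\le \gamma_b^{\max}$, $\widehat\gamma_c\le\gamma_c^{\max}$ become $\widehat\gamma_a\le \gamma_b^{\max}/\alpha$ and $\widehat\gamma_a\le\gamma_c^{\max}/(1-\alpha)$. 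Rule (B), i.e.\ maximizing the total flux $\widehat\gamma_a+\widehat\gamma_b+\widehat\gamma_c=2\widehat\gamma_a$, then forces the unique choice
\[
\widehat\gamma_a=\min\Bigl\{\gamma_a^{\max},\,\tfrac{\gamma_b^{\max}}{\alpha},\,\tfrac{\gamma_c^{\max}}{1-\alpha}\Bigr\},
\]
exactly the formula displayed just before the theorem.

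Next I would reconstruct the traces $\widehat\rho_i$ from the fluxes $\widehat\gamma_i$ by invoking the last sentence of Proposition~\ref{prop:flux-limit}: for an incoming road, the admissibility condition (only waves with non-positive speed may enter the junction) leaves a single value of $\widehat\rho_a$ compatible with the given $\widehat\gamma_a\in[0,\gamma_a^{\max}]$, and this value either equals $\rho_{a,0}$ (no wave) or lies in $\,]\tau(\rho_{a,0}),\rho_{\max}]$ when $\rho_{a,0}\le\sigma$, and lies in $[\sigma,\rho_{\max}]$ when $\rho_{a,0}\ge\sigma$; analogously for an outgoing road $j\in\{b,c\}$ the admissibility (only non-negative-speed waves leaving the junction) pins down $\widehat\rho_j$ in the sets stated in the theorem. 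This case analysis is a direct transcription of the one carried out for the $2\times 1$ case and yields both the uniqueness of the triple $(\widehat\rho_a,\widehat\rho_b,\widehat\rho_c)$ and the membership in the claimed intervals.

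Finally I would package the construction as a solution on the network. On the incoming road I would put the single wave $(\rho_{a,0},\widehat\rho_a)$ and on each outgoing road $j\in\{b,c\}$ the single wave $(\widehat\rho_j,\rho_{j,0})$; by construction of the $\widehat\rho_i$ these waves have the correct sign of speed and the fluxes satisfy $\widehat\gamma_a=\widehat\gamma_b+\widehat\gamma_c$ (which is automatic from $\alpha+(1-\alpha)=1$), so conservation of cars at $J$ holds and the resulting piecewise self-similar profile is an admissible weak solution. Uniqueness follows because any admissible weak solution must have traces $(\widehat\rho_a,\widehat\rho_b,\widehat\rho_c)$ in the sets above, and rules (A)--(B) single out the flux triple uniquely, after which Proposition~\ref{prop:flux-limit} singles out the densities.

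The only non-routine point, and the one I would be most careful with, is the check that the flux $\widehat\gamma_a$ produced by the minimum formula always lies in $[0,\gamma_a^{\max}]$ \emph{and} that the corresponding $\widehat\gamma_b,\widehat\gamma_c$ lie in $[0,\gamma_b^{\max}],[0,\gamma_c^{\max}]$ respectively, across the case split $\rho_{i,0}\lessgtr\sigma$; this is what guarantees that the reconstruction via Proposition~\ref{prop:flux-limit} actually lands in the intervals claimed for $\widehat\rho_a$ and $\widehat\rho_j$. Because rule (A) in this case gives strictly proportional splitting (no free parameter like $q$), no rule (C) is invoked and the argument is actually simpler than in the $2\times 1$ case treated in Theorem~\ref{theo:rho-values}.
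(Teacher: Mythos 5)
Your proposal is correct and follows essentially the same route as the paper: the text preceding the theorem derives $\widehat\gamma_a=\min\{\gamma_a^{\max},\gamma_b^{\max}/\alpha,\gamma_c^{\max}/(1-\alpha)\}$ from rules (A) and (B) and then recovers the unique densities ``reasoning as in the proof of Theorem~\ref{theo:rho-values}'' via Proposition~\ref{prop:flux-limit}, which is exactly your argument. The only cosmetic difference is that rule (B) maximizes the flux over incoming roads (here just $\widehat\gamma_a$) rather than the total flux $2\widehat\gamma_a$, but the two objectives are equivalent by conservation, so nothing changes.
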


%%%%%%%%%%%%%%%%%%%%%%%%%%%%%
\subsection{The case of $n$ = 2 incoming roads and $m$ = 2 outgoing roads}
\label{sec: 2.3}
%%%%%%%%%%%%%%%%%%%%%%%%%%%%%%%%%%

Let us now consider the junction with two incoming roads $a$ and $b$
and two outgoing roads $c$ and $d$. 
The distribution matrix $A$, of rule (A), takes the form
\begin{equation}
 A=\left(
\begin{array}{cc}
\alpha  &  \beta\\
1-\alpha & 1-\beta
\end{array} \right),
\label{MatrixA}
\end{equation}
where $\alpha,\beta \in \left] 0,1\right[ $. We assume that 
$\alpha\not=\beta$, otherwise we may have more than one solutions
to the Linear Programming problem, see~\cite{GaravelloPiccoli2006ab}
for details.

First notice that constraints from outgoing roads fluxes can be expressed
as:
\[
\alpha \widehat\gamma_a+\beta\widehat\gamma_b\leq\gamma_c^{max},\quad
(1-\alpha) \widehat\gamma_a+(1-\beta)\widehat\gamma_b\leq\gamma_d^{max}.
\]
Define $P=(\gamma_1,\gamma_2)$ to be the point of intersection of the two lines:
\[
\alpha \gamma_1+\beta \gamma_2=\gamma_c^{max},\quad
(1-\alpha) \gamma_1+(1-\beta) \gamma_2=\gamma_d^{max}.
\]
To express the solution we need to distinguish some cases:\\
{\bf Case a)}. If $\gamma_1\leq \gamma_a^{max}$
and $\gamma_2\leq \gamma_b^{max}$ then the solution is given by:
\[
\widehat\gamma_a=\gamma_1,\quad
\widehat\gamma_b=\gamma_2.
\]
{\bf Case b)}. If $\gamma_1> \gamma_a^{max}$
and $\gamma_2> \gamma_b^{max}$ then the solution is given by:
\[
\widehat\gamma_a=\gamma_a^{max},\quad
\widehat\gamma_b=\gamma_b^{max}.
\]
{\bf Case c)}. Assume $\gamma_1> \gamma_a^{max}$
and $\gamma_2\leq \gamma_b^{max}$. If $\alpha<\beta$
(thus $1-\beta<1-\alpha$)
then the constraint given by outgoing road $c$ is more stringent
than that of outgoing road $d$, thus the solution is given by:
\[
\widehat\gamma_a=\gamma_a^{max},\quad
\widehat\gamma_b=
\mbox{min} (\frac{\gamma_c^{max}-\alpha \gamma_a^{max}}{\beta}, \gamma_b^{max}).
%\frac{\gamma_c^{max}-\alpha\gamma_a^{max}}{\beta}.
\]
Otherwise, i.e. if $\alpha>\beta$, then the solution is given by:
\[
\widehat\gamma_a=\gamma_a^{max},\quad
\widehat\gamma_b=
\mbox{min} (\frac{\gamma_d^{max}-(1-\alpha) \gamma_a^{max}}{1-\beta}, \gamma_b^{max}).
%\frac{\gamma_d^{max}-(1-\alpha)\gamma_a^{max}}{1-\beta}.
\]
{\bf Case d)}. Assume $\gamma_1\leq \gamma_a^{max}$
and $\gamma_2> \gamma_b^{max}$. If $\alpha>\beta$
(thus $1-\beta>1-\alpha$)
then the constraint given by outgoing road $c$ is more stringent
than that of outgoing road $d$, thus the solution is given by:
\[
\widehat\gamma_a=
\mbox{min} (\frac{\gamma_c^{max}-\beta \gamma_b^{max}}{\alpha}, \gamma_a^{max}), \quad
%\frac{\gamma_c^{max}-\beta\gamma_b^{max}}{\alpha},\quad
\widehat\gamma_b=\gamma_b^{max}.
\]
Otherwise, i.e. if $\alpha<\beta$, then the solution is given by:
\[
\widehat\gamma_a=
\mbox{min} (\frac{\gamma_d^{max}-(1-\beta) \gamma_b^{max}}{1-\alpha}, \gamma_a^{max}), \quad
%\frac{\gamma_c^{max}-(1-\beta)\gamma_b^{max}}{1-\alpha},\quad
\widehat\gamma_b=\gamma_b^{max}.
\]

\section{Numerical methods: RKDG}
\label{sec3}
\setcounter{equation}{0}
\setcounter{figure}{0}
\setcounter{table}{0}

Below, we will first describe the RKDG method to discretize the 1D nonlinear traffic flow equations; then we will extend the algorithm to 1D network problems incorporating coupling conditions at the junctions. Finally, we will apply a high order limiter to preserve the upper and lower bounds of the high order solutions.

\subsection{RKDG for 1D hyperbolic equations.}

\noindent
\underline{DG spatial discretization.}
Consider the following spatial discretization: let $I_j=[x_{j-\frac{1}{2}}, x_{j+\frac{1}{2}}]$ for $j=1,\ldots,N_x$ be a partition of $[0,L]$ with $x_{j}=\frac{1}{2}(x_{j-\frac{1}{2}}+x_{j+\frac{1}{2}})$ being the cell center and $\Delta x_j=x_{j+\frac{1}{2}}-x_{j-\frac{1}{2}}$ being the cell size. The semi-discrete DG scheme for the equation \eqref{trafficeq} can be designed as finding numerical solutions $\rho_h$ in a finite dimensional space consisting of piecewise polynomials of degree $k$,
\[
V^k_h = \{u: u|_{I_j} \in P^k, \quad 1\le j \le N_x\}, \quad \mbox{for any non-negative integer $k$},
\]
so that for any test functions $\psi \in V^k_h$,
\begin{equation}
\label{dgeq}
\frac{\partial}{\partial t} \int_{I_j} \rho_h \psi dx = \int_{I_j} f(\rho_h) \partial_x \psi dx -\left( \hat{f}_{j+\frac{1}{2}} \psi^-_{j+\frac{1}{2}} - \hat{f}_{j-\frac{1}{2}} \psi^+_{j-\frac{1}{2}} \right).
\end{equation}
Here and below, the superscripts $\pm$ denotes the right/left limit of the function at a point.
The function $\hat{f}_{j+\frac{1}{2}} = \hat{f} \left(\rho_h(x_{j+\frac{1}{2}}^-), \rho_h(x_{j+\frac{1}{2}}^+)\right)$ is a single valued function at the cell interface, which is defined via an approximate Riemann solver depending on the left and right limits of the DG solutions. One example is the global Lax-Friedrich flux, for which
\beq
\label{eq: flux}
\hat{f} (\rho_h^-, \rho_h^+) = \frac12 (f(\rho_h^-) + f(\rho_h^+)) + \frac{\alpha}{2} (\rho_h^--\rho_h^+), \quad \alpha = \max_{\rho} |f'(\rho)|.
\eeq
%\noindent
%\underline{Basis and implementations.}
%\QQ{
For implementation, the approximate solution $\rho_h$ on mesh $I_j$ can be expressed as
\begin{equation}
\rho_h(x,t)=\sum_{l=0}^{k} \rho_j^l(t)\psi_j^l(x),
\end{equation}
where $\{\psi_j^l(x)\}_{l=0}^k$ is the set of basis functions of $P^k(I_j)$. For example, the Legendre polynomials are a local orthogonal basis of $P^k(I_j)$ with
\[
\psi_j^0=1, \quad \psi_j^1=\left(\frac{x-x_j}{\Delta x_j/2}\right), \quad \psi_j^2=\left(\frac{x-x_j}{\Delta x_j/2}\right)^2-\frac{1}{3}, \quad \ldots
\]
%We replace the flux function $f$ in eq.~(\ref{dgeq}) by numerical flux and obtain,
%\begin{equation}
%\frac{d}{dt} \rho_j^l = \frac{2l+1}{\Delta x_j} \left[\int_{I_j} f(\rho_h) \frac{d}{dx} \psi_j^l dx - \left( \hat{f}(\rho_h)_{j+\frac{1}{2}} \psi_j^l (x_{j+\frac{1}{2}}^-) - \hat{f}(\rho_h)_{j-\frac{1}{2}} \psi_j^l (x_{j-\frac{1}{2}}^+) \right) \right].
%\label{semidg}
%\end{equation}
%The volume integral in eq.~(\ref{semidg}) can be approximated by Gaussian quadrature rule.
%}

\noindent
\underline{RK time discretization in time.}
Eq.~(\ref{dgeq}) is solved in time via the method of lines by a TVD RK method~\cite{shu1988total} in the following form,
\begin{equation}
\begin{split}
&\mathbf{\rho}_h^{(1)}=\mathbf{\rho}_h^n+\Delta t^n L(\mathbf{\rho}_h^n), \\
&\mathbf{\rho}_h^{(2)}=\frac{3}{4}\mathbf{\rho}_h^n+\frac{1}{4}\mathbf{\rho}_h^{(1)}+\frac{1}{4}\Delta t^n L(\mathbf{\rho}_h^{(1)}), \\
&\mathbf{\rho}_h^{n+1}=\frac{1}{3}\mathbf{\rho}_h^n+\frac{2}{3}\mathbf{\rho}_h^{(2)}+\frac{2}{3}\Delta t^n L(\mathbf{\rho}_h^{(2)}),
\end{split}
\label{rk3}
\end{equation}
where $L$ is the spatial operator, which denotes the R.H.S. of eq. \eqref{dgeq}, and $\Delta t^n$ is the numerical time step.

\noindent
\underline{TVB limiters.}
When the solutions contain shocks, the TVB limiters proposed by Cockburn and Shu~\cite{cockburn1989tvb} will be used to eliminate spurious oscillations and enforce stability.
Let $\bar{\rho}_j$ be the cell averages of the numerical solution on cell $I_j$, and let
\begin{equation}
\tilde{\rho}_j \doteq \rho_h(x_{j+1/2}^-)-\bar{\rho}_j,\quad \tilde{\tilde{\rho}}_j\doteq -(\rho(x_{j-1/2}^+)-\bar{\rho}_j).
\end{equation}
%\begin{equation}
%\rho_h(x_{j+1/2}^-)=\bar{\rho}_j+\tilde{\rho}_j,\quad \rho(x_{j-1/2}^+)=\bar{\rho}_j+\tilde{\tilde{\rho}}_j.
%\end{equation}
The TVB limiter is used to adjust $\tilde{\rho}_j, \tilde{\tilde{\rho}}_j$ as follows,
\begin{equation}
\tilde{\rho}_j^{(\mbox{mod})}=\bar{m}(\tilde{\rho}_j,\Delta_+ \bar{\rho}_j,\Delta_- \bar{\rho}_j),\quad \tilde{\tilde{\rho}}_j^{(\mbox{mod})}=\bar{m}(\tilde{\tilde{\rho}}_j,\Delta_+ \bar{\rho}_j,\Delta_- \bar{\rho}_j),
\end{equation}
where $\Delta_+ \bar{\rho}_j=\bar{\rho}_{j+1}-\bar{\rho}_j$, and $\Delta_- \bar{\rho}_j=\bar{\rho}_j-\bar{\rho}_{j-1}$, and the modified minmod function $\bar{m}$ is defined by
\begin{equation}
\bar{m}(a_1,a_2,a_3)=\begin{cases}
a_1, \quad &\mbox{if}\, |a_1| \le M \Delta x_j^2 \\
s\, \mbox{min}(|a_1|,|a_2|,|a_3|), \quad &\mbox{if}\,|a_1|> M \Delta x_j^2, \,\mbox{sign}(a_1)=\mbox{sign}(a_2)=\mbox{sign}(a_3)=s,\\
0, \quad & \mbox{otherwise}.
\end{cases}
\end{equation}
The limited $\tilde{\rho}_j^{(\mbox{mod})}$ and $\tilde{\tilde{\rho}}_j^{(\mbox{mod})}$ are then used to recover the new point values,
\begin{equation}
\rho_{j+1/2}^{(\mbox{mod}),-}=\bar{\rho}_j+\tilde{\rho}_j^{(\mbox{mod})},\quad \rho_{j-1/2}^{(\mbox{mod}),+}=\bar{\rho}_j-\tilde{\tilde{\rho}}_j^{(\mbox{mod})}.
\end{equation}
With the modified $\rho_{j+1/2}^{(\mbox{mod}),-}$, $\rho_{j-1/2}^{(\mbox{mod}),+}$ as numerical solutions at the cell boundaries, as well as the cell average $\bar{\rho}_j$, we can construct a unique $P^2$ polynomial as the modified numerical solution.

\noindent
\underline{Boundary conditions.} Depending on the flow directions at the boundaries, one can prescribe either the inflow or outflow boundary conditions for the open boundaries. The RKDG method is well-known for its compactness, for which the inflow information or outflow information from extrapolation can be directly used in evaluating the flux specified in equation \eqref{eq: flux} at the boundary.
Treatment of the coupling (boundary) conditions at network's vertices is described next.

%\QQ{For all numerical examples except the accuracy test, we set constant inflow boundary condition. It's very easy to implement it for DG scheme. For the outflow boundary condition, extrapolation is used to set the numerical boundary condition. }
\subsection{RKDG for hyperbolic networks.}
 \label{junctionflux}
The coupling conditions within a network, consisting of many incoming and outgoing roads with different junction points, are described in a general setting in Section~\ref{sec2}. Below, we consider specific ways of constructing numerical coupling conditions, i.e.,
the numerical fluxes, at a single junction point of the following types: (a) one incoming and one outgoing road, (b) two incoming roads and one outgoing road, (c) one incoming and two outgoing roads, (d) two incoming and two outgoing roads.
Similar coupling conditions can be derived for more complicated cases based on the rules (A), (B), (C) specified in Section~\ref{sec2}.

Below, we
%assume the following notation in our numerical discretization: the junction point is at the right boundary of the $N_m^{th}$ cell from the discretization of an incoming Road $m$ and the first cell from the discretization of an outgoing Road $n$. We also
assume that $\gamma_m^{max}$ is chosen following equation \eqref{fluxUno} with $\rho$ being the right limit of the numerical solution on the $N_m^{th}$ cell of an incoming road $m$ at the junction point; $\gamma_n^{max}$ is chosen following equation \eqref{fluxDue}
with $\rho$ being the left limit of the numerical solution on the first cell of an outgoing road $n$ at the junction point.

% with $n$ incoming roads and $m$ outgoing roads
%
%For the incoming roads, we set the junction point to be at the end point of the roads, for the outgoing roads, we set the junction
%point as the starting point of the roads. We discrete all the incoming and outgoing roads by $N$ points from the starting points to the end points. For the incoming roads, the junction point is at $x_{N+\frac{1}{2}}$, and for the
%outgoing roads, the junction point is at $x_{\frac{1}{2}}$. Numerically, we need to set the numerical fluxes $\hat{f}_{N+\frac{1}{2}}^n$ for the incoming roads and $\hat{f}_{\frac{1}{2}}^m$ for the outgoing roads by the coupling
%conditions.

\noindent
\underline{One incoming road and one outgoing road.}
Let us consider the junction with one incoming road $a$ and one outgoing road $b$. The coupling conditions are
\begin{equation}
\widehat{\gamma}_a=\widehat{\gamma}_b=\widehat{\gamma}, \quad \widehat{\gamma}=\mbox{min}\, \{\gamma_a^{max}, \gamma_b^{max}\}.
\label{oneone1}
\end{equation}
%where $\gamma_a^{max}$ is chosen following equation \eqref{fluxUno} with $\rho=\rho_{a, h}(x_{N_a+\frac12}^-)$,
%and $\gamma_b^{max}$ is chosen following equation \eqref{fluxDue} with $\rho=\rho_{b, h}(x_{\frac12}^+)$. Here the subscripts $a$ and $b$ are to denote different roads for their densities.
The numerical fluxes of the junction point at incoming and outgoing roads are set to be
\begin{equation}
 \hat{f}_{N_a+\frac{1}{2}}^a=\widehat{\gamma}_a, \quad \hat{f}_{\frac{1}{2}}^b=\widehat{\gamma}_b.
 \label{oneone2}
\end{equation}

\noindent
\underline{One incoming road and two outgoing roads}. Let us consider the junction with one incoming road $a$ and two outgoing roads $b$ and $c$. With the notation introduced for $\alpha$ in Section~\ref{sec: 2.2},
the coupling conditions are
\begin{equation}
\widehat{\gamma}_a=\mbox{min} \left\{\gamma_a^{max}, \frac{\gamma_b^{max}}{\alpha}, \frac{\gamma_c^{max}}{1-\alpha}\right\}, \quad \widehat{\gamma_b}=\alpha \widehat{\gamma}_a,
\quad \widehat{\gamma_c}=(1-\alpha) \widehat{\gamma}_a.
\end{equation}
%with $\gamma_a^{max}$ is chosen following equation \eqref{fluxUno} with $\rho=\rho_{a, h}(x_{N_a+\frac12}^-)$, and similarly for $\gamma_b^{max}$ and $\gamma_c^{max}$ following equation \eqref{fluxDue}.
The numerical fluxes at the junction point at the incoming and outgoing roads are
\begin{equation}
 \hat{f}_{N_a+\frac{1}{2}}^a=\widehat{\gamma}_a, \quad \hat{f}_{\frac{1}{2}}^b=\widehat{\gamma}_b, \quad \hat{f}_{\frac{1}{2}}^c=\widehat{\gamma}_c.
\end{equation}

\noindent
\underline{Two incoming roads and one outgoing road}. Let us consider the junction with two incoming roads $a, b$, and one outgoing road $c$. With the notation introduced for $q$ in Section~\ref{sec: 2.1}, the coupling conditions are the following.
For the case of $\gamma_a^{max}+\gamma_b^{max} < \gamma_c^{max}$, we have
\begin{equation}
 \quad \widehat{\gamma}_a=\gamma_a^{max}, \quad \widehat{\gamma}_b=\gamma_b^{max}, \quad \widehat{\gamma}_c=\widehat{\gamma}_a+\widehat{\gamma}_b.
 \label{twooneflux1}
\end{equation}
For the case of $\gamma_a^{max}+\gamma_b^{max} > \gamma_c^{max}$, the coupling conditions are
\begin{eqnarray}
\widehat{\gamma}_a=q \gamma_c^{max}, \ \widehat{\gamma}_b=(1-q) \gamma_c^{max}, \ \widehat{\gamma}_c=\gamma_c^{max}, \quad &\mbox{if} \ \gamma_a^{max}\ge q \gamma_c^{max}, \ \gamma_b^{max}\ge(1-q) \gamma_c^{max},\\ \notag
\widehat{\gamma}_a=\gamma_a^{max}, \ \widehat{\gamma}_b=\gamma_c^{max}-\gamma_a^{max}, \ \widehat{\gamma}_c=\gamma_c^{max}, \quad &\mbox{if} \ \gamma_a^{max}<q \gamma_c^{max}, \ \gamma_b^{max}\ge(1-q) \gamma_c^{max},\\ \notag
\widehat{\gamma}_a=\gamma_c^{max}-\gamma_b^{max},\ \widehat{\gamma}_b=\gamma_b^{max}, \  \widehat{\gamma}_c=\gamma_c^{max}, \quad &\mbox{if} \ \gamma_a^{max}\ge q \gamma_c^{max}, \ \gamma_b^{max}<(1-q) \gamma_c^{max}.\notag
 \label{twooneflux2}
\end{eqnarray}
The numerical fluxes at the junction point between the incoming and outgoing roads are
\begin{equation}
 \hat{f}_{N_a+\frac{1}{2}}^a=\widehat{\gamma}_a, \quad \hat{f}_{N_b+\frac{1}{2}}^b=\widehat{\gamma}_b, \quad \hat{f}_{\frac{1}{2}}^c=\widehat{\gamma}_c.
 %, \quad \hat{f}_{\frac{1}{2}}^d=\widehat{\gamma}_d.
  \label{twooneflux3}
\end{equation}

\noindent
\underline{Two incoming roads and two outgoing roads}. Let us consider the junction with two incoming roads $a, b$, and two outgoing roads $c, d$. The coupling conditions follow those described in Section~\ref{sec: 2.3} for $\widehat{\gamma}_a, \widehat{\gamma}_b$, with $(\widehat{\gamma}_c, \widehat{\gamma}_d)^T=A (\widehat{\gamma}_a, \widehat{\gamma}_b)^T$.
The numerical fluxes at the junction point on incoming and outgoing roads are
\begin{equation}
 \hat{f}_{N_a+\frac{1}{2}}^a=\widehat{\gamma}_a, \quad \hat{f}_{N_b+\frac{1}{2}}^b=\widehat{\gamma}_b, \quad \hat{f}_{\frac{1}{2}}^c=\widehat{\gamma}_c,\quad \hat{f}_{\frac{1}{2}}^d=\widehat{\gamma}_d.
\label{twotwoflux}
 \end{equation}
\begin{rem} One distinct property of the DG method compared with the other high order methods, e.g. finite volume or finite difference methods, for solving hyperbolic equations is the compactness of the scheme. Specifically, the high order fluxes depend only on the direct neighboring cells. Such property shows great advantage in prescribing boundary conditions at the junction points of hyperbolic networks.
\end{rem}

\subsection{Bound preserving numerical solutions}

In the traffic flow model, it is known that $\rho(x, t) \in [0, \rho_{max}]$. However, such a property does not hold for high order numerical solutions in general.
To preserve the theoretical bounds on the RKDG solution,
in this subsection we propose to apply the limiter proposed in~\cite{zhang2010maximum}.
The application of this limiter is based on the fact that a first order monotone scheme with piecewise constant numerical solutions for  network problems satisfies the following property.

Let $\bar\rho^n_j$ denote numerical approximations to solutions at cell $I_j$ at time $t^n$, and let $\Delta x$ be the spatial mesh size for a uniform mesh. (Similar results hold for nonuniform meshes.)
%due the the maximum principle preserving property of a first order monotone scheme for general 1D hyperbolic conservation laws \cite{leveque1992numerical} as well as the junction coupling conditions specified in Section~\ref{sec2}.

\begin{defn} (Monotone scheme)
A first order monotone scheme for the 1-D hyperbolic equation~\eqref{trafficeq} can be written in the following form,
\[
\bar\rho_{j}^{n+1} = \bar\rho_{j}^{n} - \frac{\Delta t}{\Delta x} (\hat{f}_{j+\frac12} -\hat{f}_{j-\frac12}).
\]
Here $\hat{f}_{j+\frac12} = \hat{f}(\bar\rho_{j}^n, \bar\rho_{j+1}^n)$ is a monotone flux, that is, it is a non-decreasing function with respect to the first argument, and non-increasing function with respect to the second argument. Similarly for $\hat{f}_{j-\frac12}$.
\end{defn}
It can be easily shown that in the monotone scheme, $\bar\rho_{j}^{n+1}\doteq G(\bar\rho_{j-1}^n, \bar\rho_{j}^n, \bar\rho_{j+1}^n)$ is a non-decreasing function with respect to $\bar\rho_{j-1}^{n}$, $\bar\rho_{j}^{n}$ and $\bar\rho_{j+1}^{n}$, if a proper CFL condition is satisfied. Hence,
\beq
\label{eq: mmp}
0= G(0, 0, 0) \le \bar\rho_{j}^{n+1} \le G(\rho_{max}, \rho_{max}, \rho_{max}) = \rho_{max}, \ \forall n,j,
\eeq
provided that the initial condition $\bar\rho_{j}^{0} \in [0, \rho_{max}]$, leading to the bound preserving property of the numerical solution.
%In particular, we have
% \beq
% \label{eq: mmp}
% \bar\rho_{j}^{n} \in [0, \rho_{max}], \ \forall n,j, \quad \mbox{if the initial condition} \ \bar\rho_{j}^{0} \in [0, \rho_{max}].
% \eeq
 The proposition below is a generalization of such a result to hyperbolic network problems. Without loss of generality, we consider the Godunov flux as our numerical flux, with
 \beq
 \hat{f} (\bar\rho_j, \bar\rho_{j+1}) = \left \{
 \begin{array} {ll}
 \min_{\rho\in [\bar\rho_j, \bar\rho_{j+1}]} f(\rho), & \mbox{if} \   \bar\rho_j \le \bar\rho_{j+1}\\
 \max_{\rho\in [\bar\rho_{j+1}, \bar\rho_{j}]} f(\rho), & \mbox{otherwise.}
 \end{array}
 \right.
 \eeq
\begin{prop}
Consider a first order monotone scheme with Godnov flux as a numerical flux for the 1-D hyperbolic equation \eqref{trafficeq} holding on each road
in a network, satisfying the coupling conditions at the junctions respecting the rules (A), (B), and ( C),  with equations \eqref{fluxUno} and \eqref{fluxDue} specified in Section~\ref{sec2}. Then, the numerical solution satisfies the bounds \eqref{eq: mmp}.
\end{prop}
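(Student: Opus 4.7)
My plan is to extend the classical single-road argument: under a suitable CFL condition, write each updated cell average as $\bar\rho^{n+1}_j = G(\{\bar\rho^n_k\}_k)$, where $k$ ranges over the neighboring cells (including any neighbors reached through a junction), and verify that $G$ is non-decreasing in each of its arguments. The conclusion then follows from
\[
0 = G(0,\dots,0) \le \bar\rho^{n+1}_j \le G(\rho_{\max},\dots,\rho_{\max}) = \rho_{\max}.
\]
Interior cells are handled verbatim by the standard Godunov argument, so the entire question reduces to cells that are adjacent to a junction.

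For a cell $N_a$ at the downstream end of an incoming road $a$, the update reads
\[
\bar\rho^{n+1}_{N_a} = \bar\rho^{n}_{N_a} - \frac{\Delta t}{\Delta x}\left(\widehat{\gamma}_a - \hat f_{N_a-\frac12}\right),
\]
and analogously for the first cell of any outgoing road. I need to verify two properties of $\widehat{\gamma}_a$, viewed as a function of the cell averages on all roads incident to the junction: (i) \emph{monotonicity}---$\widehat{\gamma}_a$ is non-decreasing in the end-cell average of each incoming road and non-increasing in the first-cell average of each outgoing road; and (ii) \emph{vanishing at the extremes}---$\widehat{\gamma}_a = 0$ when every incident cell equals $0$, and also when every incident cell equals $\rho_{\max}$. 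Given (i), a CFL condition of the form $\tfrac{\Delta t}{\Delta x}\max|f'| \le \tfrac12$ gives $\partial G/\partial \bar\rho^n_{N_a} \ge 0$, since both the Godunov flux and $\gamma^{\max}_a$ are Lipschitz with constant at most $\max|f'|$. Given (ii), together with the consistency relations $\hat f(0,0) = f(0) = 0$ and $\hat f(\rho_{\max},\rho_{\max}) = f(\rho_{\max}) = 0$, one reads off $G(0,\dots,0) = 0$ and $G(\rho_{\max},\dots,\rho_{\max}) = \rho_{\max}$.

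Property (i) reduces to a direct inspection: formulas \eqref{fluxUno}--\eqref{fluxDue} make $\gamma^{\max}_i(\rho)$ non-decreasing in $\rho$ on incoming roads and non-increasing in $\rho$ on outgoing ones, and the closed-form expressions for $\widehat{\gamma}$ in the $1\times 1$, $1\times 2$, $2\times 1$, and $2\times 2$ cases combine these $\gamma^{\max}$'s only via $\min$, multiplication by a positive constant, and (in the $2\times 2$ Cases c and d) subtraction of an incoming $\gamma^{\max}$ from a constant---all of which preserve the required sign pattern. Property (ii) is equally direct: at all-zero data, $\gamma^{\max}_i = f(0) = 0$ on every incoming road and each $\widehat{\gamma}$ is bounded above by a non-negative combination of incoming $\gamma^{\max}$'s, forcing $\widehat{\gamma} = 0$; at all-$\rho_{\max}$ data, $\gamma^{\max}_j = f(\rho_{\max}) = 0$ on every outgoing road, and the $\min$ against these outgoing caps in every $\widehat{\gamma}$ formula again forces $\widehat{\gamma} = 0$. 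The main obstacle is the bookkeeping for the $2\times 2$ junction, whose four sub-cases must be checked individually and where one must also verify that the piecewise definition is globally monotone across case boundaries, but each sub-case ultimately reduces to the same elementary sign analysis on $\gamma^{\max}$.
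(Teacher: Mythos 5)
Your overall plan---prove that the full network update map $G$ is non-decreasing in every one of its arguments and then evaluate at the all-$0$ and all-$\rho_{\max}$ states---does not go through, and the failure is not just bookkeeping. First, property (i) is false as stated: at a $2\times 1$ or $2\times 2$ junction, $\widehat{\gamma}_a$ is \emph{non-increasing}, not non-decreasing, in the end-cell average of the \emph{other} incoming road (in the relevant regime $\widehat{\gamma}_a=\gamma_c^{\max}-\gamma_b^{\max}$, and in Case c of the $2\times 2$ junction $\widehat{\gamma}_b$ contains the term $(\gamma_c^{\max}-\alpha\gamma_a^{\max})/\beta$); the ``subtraction of an incoming $\gamma^{\max}$'' that you mention reverses the sign rather than preserving it. That particular sign happens to be the one actually needed, since the junction flux enters the update of an incoming end cell with a minus sign; but this exposes the second, fatal problem. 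At any junction with two outgoing roads, $\widehat{\gamma}_b$ is genuinely \emph{decreasing} in the first-cell average of the other outgoing road $c$ (congestion on $c$ throttles $\widehat{\gamma}_a$, hence $\widehat{\gamma}_b=\alpha\widehat{\gamma}_a$), and this flux enters the update of road $b$'s first cell with a \emph{plus} sign, so
\[
\frac{\partial}{\partial \bar\rho^{\,n}_{1,c}}\Bigl(\bar\rho^{\,n}_{1,b}-\frac{\Delta t}{\Delta x}\bigl(\hat f_{3/2}-\widehat{\gamma}_b\bigr)\Bigr)
=\frac{\Delta t}{\Delta x}\,\frac{\partial \widehat{\gamma}_b}{\partial\bar\rho^{\,n}_{1,c}}\le 0,
\]
with strict inequality on an open set of data. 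The network update map is therefore not monotone in all of its arguments, the comparison $G(0,\dots,0)\le G(\cdot)\le G(\rho_{\max},\dots,\rho_{\max})$ is unavailable, and no CFL restriction repairs this. (This is consistent with the lack of monotone or Lipschitz dependence for network Cauchy problems noted in the introduction.)

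The paper's proof avoids global monotonicity altogether by a sandwich argument that is purely local to each boundary cell: whatever the data on the other roads may be, the junction flux seen by, say, the first cell of an outgoing road satisfies $0\le\hat f_{1/2}\le\gamma^{\max}(\bar\rho^n_1)$, and \emph{both} endpoints of this interval are realized by the single-road Godunov flux with a suitable ghost value: $0=\hat f(0,\bar\rho^n_1)$, while $\gamma^{\max}(\bar\rho^n_1)$ equals $\hat f(\sigma,\bar\rho^n_1)$ if $\bar\rho^n_1\le\sigma$ and $\hat f(\bar\rho^n_1,\bar\rho^n_1)$ otherwise. Replacing the junction flux by these extremes squeezes the actual update between two ordinary single-road monotone updates, each of which stays in $[0,\rho_{\max}]$. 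If you want to salvage your argument, this is the repair: you only need the \emph{range} of the junction flux (your property (ii) is essentially the lower half of this), never its monotone dependence on the data of the other roads incident to the junction.
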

\begin{proof}
It is sufficient to prove the statement for the boundary elements adjacent to junctions. We consider the left-most element on a road ($j=1$), which is an outgoing road at a junction.
From equation \eqref{fluxDue}, together with the rules (A), (B), and (C), we have $\hat{f}_\frac12 \ge 0 = \hat{f}(0, \bar\rho^n_1)$. Hence
\[
\bar\rho_{1}^{n+1} = \bar\rho_{1}^{n} - \frac{\Delta t}{\Delta x} (\hat{f}_{\frac32} -\hat{f}_{\frac12}) \ge \bar\rho_{1}^{n} - \frac{\Delta t}{\Delta x} (\hat{f}_{\frac32} -\hat{f}(0, \bar\rho^n_1)) \ge 0,
\]
where the last inequality is due to the monotonicity of the scheme.
In order to prove $\bar\rho_{1}^{n+1} \le \rho_{max}$, we discuss two cases:
\begin{enumerate}
\item[(a)] When $\bar\rho^n_1 \le \sigma$, we have $\hat{f}_\frac12 \le f(\sigma) = \hat{f}(\sigma, \bar\rho^n_1)$, hence
\[
\bar\rho_{1}^{n+1} = \bar\rho_{1}^{n} - \frac{\Delta t}{\Delta x} (\hat{f}_{\frac32} -\hat{f}_{\frac12}) \le \bar\rho_{1}^{n} - \frac{\Delta t}{\Delta x} (\hat{f}_{\frac32} -\hat{f}(\sigma, \bar\rho^n_1)) \le \rho_{max},
\]
where the last inequality is due to the monotonicity of the scheme.
\item[(b)] Similarly, when $\bar\rho^n_j > \sigma$, we have $\hat{f}_\frac12 \le f(\bar\rho^n_1) = \hat{f}(\bar\rho^n_1, \bar\rho^n_1)$, hence
\[
\bar\rho_{1}^{n+1} = \bar\rho_{1}^{n} - \frac{\Delta t}{\Delta x} (\hat{f}_{\frac32} -\hat{f}_{\frac12}) \le \bar\rho_{1}^{n} - \frac{\Delta t}{\Delta x} (\hat{f}_{\frac32} -\hat{f}(\bar\rho^n_1, \bar\rho^n_1)) \le \rho_{max}.
\]
\end{enumerate}
Similar procedures can be done to prove the property for the right-most element on a road.
\end{proof}

In~\cite{zhang2010maximum}, a maximum principle preserving limiter is introduced for the RKDG scheme to preserve the maximum principle of the numerical solutions for hyperbolic PDEs, with the assumption that a first order monotone scheme satisfies the same property. The procedure of the maximum principle preserving limiter can be viewed as controlling the maximum and minimum of the numerical solution (polynomials on discretized cells) by a linear rescaling around cell averages.
%, with the assumption that the cell averages are well bounded.
Such a procedure can be applied to control the bounds of the high order RKDG solutions for hyperbolic network problems.
In particular, we would like to modify the numerical solution $\rho_h(x)$ to ${\rho}^*_h(x)$, approximating a function $\rho(x)$ on a cell $I_j$, such that it satisfies
\bit
\item Accuracy: for smooth function $\rho(x)$, $\|\rho_h(x) - {\rho}^*_h(x)\| = \mathcal{O}(\Delta x^{k+1})$, on $I_j$;
\item Mass conservation property: $\int_{I_j} {\rho}^*_h (x) dx =  \int_{I_j} \rho_h(x)dx \doteq \bar{\rho}_j$;
\item Bounds-preserving: ${\rho}^*_h(x) \in [0, \rho_{max}]$ on $I_j$.
\eit
In order to achieve the above mentioned properties, one can apply the following limiter
\beq
\label{eq: MPP}
{\rho}^*_h(x) = \theta(\rho_h(x)-\bar{\rho}_j) + \bar{\rho}_j, \quad \theta = \min
\left\{ \bigg |\frac{\rho_{max}-\bar{\rho}_j}{M_j-\bar{\rho}_j} \bigg |, \bigg |\frac{\bar{\rho}_j}{m_j-\bar{\rho}_j} \bigg|, 1
\right\},
\eeq
where $M_j$ and $m_j$ are the maximum and the minimum of $\rho_h(x)$ at Legendre Gauss-Lobatto quadrature points for the cell $I_j$.
It can be easily checked that with the application of such a limiter, the conservation and bound preserving properties of the numerical solution are satisfied. Furthermore, it
was proved~\cite{zhang2010maximum} that such a limiting process maintains the original $(k+1)^{th}$ order accuracy of the approximation.

Since the first order monotone scheme preserves the bounds for hyperbolic network problems, following similar procedures as in~\cite{zhang2010maximum} one can show that
the cell averages of the high order scheme are also well bounded, i.e. $\bar{\rho}_j \in [0, \rho_{max}]$, $\forall j$,
under the additional CFL constraint:
\[
\max_{\rho} |f'(\rho)| \frac{\Delta t}{\Delta x} \le \min_i w_i,
\]
where  $w_i$'s are the quadrature weights in the Legendre Gauss-Lobatto quadrature rule on a standard interval $[-\frac12, \frac12]$. Hence, the above limiter can be applied to the proposed RKDG scheme for hyperbolic networks. We also remark that if $\rho_{max}$ varies among different roads within a network, one can apply the similar limiter with the appropriate upper bounds on density.

\section{Numerical examples}
\label{sec4}
\setcounter{equation}{0}
\setcounter{figure}{0}
\setcounter{table}{0}

In this section, we reproduce simulation results from~\cite{bretti2006numerical}, using the high order RKDG method, discussed above,
to compare the performance of the proposed high order scheme with the first order scheme used in~\cite{bretti2006numerical}.
We also present several new examples with more complicated solution structures to showcase the advantages of high order schemes.
In our numerical examples, for the third-order TVD Runge-Kutta method~(\ref{rk3}), we take CFL=1.0, 0.33, 0.20, 0.14 for $P^0$, $P^1$, $P^2$ and $P^3$ solution spaces corresponding to DG schemes with first to fourth spatial orders respectively.
The time step $\Delta t^n=\mbox{CFL} \Delta x$ for $P^0$, $P^1$ and $P^2$ solution spaces, while $\Delta t^n=\mbox{CFL} \Delta x^{\frac{4}{3}}$ for the $P^3$ solution space.
The cell size is $1/40$ and the reference solutions $P_{ref}^0$ are obtained by first order RKDG (finite volume method) with cell size $1/1600$ in all examples, except the accuracy test.

%Specifically, the maximization of incoming and outgoing roads at junction point are performed following the rules~(\ref{fluxUno}) and~(\ref{fluxDue}).
%The numerical fluxes at the junction point are obtained by the equations in Section~\ref{junctionflux}.

\subsection{Accuracy test}
The first test is to solve the traffic flow equation~(\ref{trafficeq}) with the following flux function
\beq
\label{eq: flux_function}
f(\rho)=\rho(1-\rho),\quad \rho \in [0,1],
\eeq
with the initial condition
\begin{equation}
 \rho(x,0)=0.5+0.5 \sin(2 \pi x).
\label{accu_init}
\end{equation}
The computational domain is  $[0,1]$ with periodic boundary condition. We compute the solutions up to time $t=0.1$.
Newton's method is used to get the reference solution. We use a smaller CFL number, i.e. CFL=0.05 for the $P^2$ and $P^3$ cases to ensure that the spatial error dominates, so that the spatial order of accuracy can be observed for the scheme with the BP limiter.
The results without and with BP limiter are shown in Tables~\ref{table1} and~\ref{table2}.
One can observe the $(k+1)^{st}$-order convergence rate  for $P^k (k=0,1,2,3)$ solution spaces for the scheme with or without the BP limiter.
The results without BP limiter show that the regular RKDG scheme produces numerical solutions that overshoot and undershoot the bounds of the exact solution. With the BP limiter, one can see that the scheme produces results that respect the bounds of the physical solutions.

\begin{table}
\centering
 \caption{Accuracy test, $L^1$ and $L^\infty$ errors and orders, minimum and maximum of numerical solutions for the initial condition~(\ref{accu_init})
 without BP limiter, for $P^0$, $P^1$, $P^2$ and $P^3$ solution spaces. }
  \begin{tabular}{|c|c|c|c|c|c|c|c|}
    \hline
    &N  &  $L^1$ error & order   & $L^\infty$ error  & order & min &max \\\hline
\multirow{7}{*}{$P^0$}
  &10  &0.28E-01  &--    &0.30E+00  &-- &0.000000 &1.000000\\ \cline{2-8}
  &20  &0.14E-01  &0.95  &0.21E+00  &0.48&0.000000 &1.000000\\ \cline{2-8}
  &40  &0.73E-02  &0.97  &0.12E+00  &0.84&0.000000 &1.000000\\ \cline{2-8}
  &80  &0.37E-02  &0.98  &0.66E-01  &0.86&0.000000 &1.000000\\ \cline{2-8}
  &160  &0.19E-02  &0.99  &0.34E-01  &0.94&0.000000 &1.000000\\ \cline{2-8}
  &320  &0.93E-03  &1.00  &0.17E-01  &0.98&0.000000 &1.000000\\ \hline
\multirow{7}{*}{$P^1$}
  &10  &0.46E-02  &--    &0.87E-01  &--&-0.056360 &1.056360\\ \cline{2-8}
  &20  &0.11E-02  &2.12  &0.29E-01  &1.58&-0.009971 &1.009971\\ \cline{2-8}
  &40  &0.25E-03  &2.09  &0.72E-02  &2.02&-0.002324 &1.002324\\ \cline{2-8}
  &80  &0.61E-04  &2.03  &0.19E-02  &1.91&-0.000549 &1.000549\\ \cline{2-8}
  &160  &0.15E-04  &2.01  &0.49E-03  &1.96&-0.000133 &1.000133\\ \cline{2-8}
  &320  &0.38E-05  &2.00  &0.13E-03  &1.98&-0.000033 &1.000033\\ \hline
\multirow{7}{*}{$P^2$}
  &10  &0.28E-03  & --   &0.69E-02  & -- &0.000000 &1.000000 \\  \cline{2-8}
  &20  &0.48E-04  &2.55  &0.18E-02  &1.97&0.000000 &1.000000 \\ \cline{2-8}
  &40  &0.85E-05  &2.49  &0.71E-03  &1.32&0.000000 &1.000000 \\ \cline{2-8}
  &80  &0.12E-05  &2.83  &0.11E-03  &2.73 &0.000000 &1.000000\\ \cline{2-8}
  &160 &0.16E-06  &2.89  &0.21E-04  &2.32 &0.000000 &1.000000 \\  \cline{2-8}
  &320  &0.22E-07  &2.88  &0.42E-05  &2.35&0.000000 &1.000000\\ \hline
\multirow{7}{*}{$P^3$}
  &10  &0.44E-04  & --   &0.24E-02  & -- &0.000000 &1.000000 \\  \cline{2-8}
  &20  &0.52E-05  &3.11  &0.84E-03  &1.55 &-0.000043 &1.000043\\ \cline{2-8}
  &40  &0.24E-06  &4.45  &0.72E-04  &3.54 &-0.000002 &1.000002\\ \cline{2-8}
  &80  &0.13E-07  &4.21  &0.49E-05  &3.89 &0.000000 &1.000000\\ \cline{2-8}
  &160 &0.77E-09  &4.06  &0.32E-06  &3.92 &0.000000 &1.000000 \\  \cline{2-8}
  &320  &0.47E-10  &4.01  &0.20E-07  &3.98&0.000000 &1.000000\\ \hline
  \end{tabular}
\label{table1}
\end{table}

\begin{table}
\centering
 \caption{Accuracy test, $L^1$ and $L^\infty$ errors and orders, minimum and maximum of numerical solutions for the initial condition~(\ref{accu_init})
 with BP limiter, for $P^0$, $P^1$, $P^2$ and $P^3$ solution spaces. }
  \begin{tabular}{|c|c|c|c|c|c|c|c|}
    \hline
    &N  &  $L^1$ error & order   & $L^\infty$ error  & order & min &max \\\hline
\multirow{7}{*}{$P^0$}
  &10  &0.28E-01  &--    &0.30E+00  &--&0.000000 &1.000000 \\ \cline{2-8}
  &20  &0.14E-01  &0.95  &0.21E+00  &0.48&0.000000 &1.000000\\ \cline{2-8}
  &40  &0.73E-02  &0.97  &0.12E+00  &0.84&0.000000 &1.000000\\ \cline{2-8}
  &80  &0.37E-02  &0.98  &0.66E-01  &0.86&0.000000 &1.000000\\ \cline{2-8}
  &160  &0.19E-02  &0.99  &0.34E-01  &0.94&0.000000 &1.000000\\ \cline{2-8}
  &320  &0.93E-03  &1.00  &0.17E-01  &0.98&0.000000 &1.000000\\ \hline
\multirow{7}{*}{$P^1$}
  &10  &0.59E-02  &--    &0.95E-01  &--&0.000000 &1.000000\\ \cline{2-8}
  &20  &0.11E-02  &2.37  &0.30E-01  &1.64&0.000000 &1.000000\\ \cline{2-8}
  &40  &0.26E-03  &2.14  &0.73E-02  &2.07&0.000000 &1.000000\\ \cline{2-8}
  &80  &0.62E-04  &2.04  &0.19E-02  &1.92&0.000000 &1.000000\\ \cline{2-8}
  &160  &0.15E-04  &2.02  &0.49E-03  &1.97&0.000000 &1.000000\\ \cline{2-8}
  &320  &0.38E-05  &2.01  &0.13E-03  &1.98&0.000000 &1.000000\\ \hline
\multirow{7}{*}{$P^2$}
  &10  &0.29E-03  & --   &0.54E-02  & -- &0.000000 &1.000000 \\  \cline{2-8}
  &20  &0.48E-04  &2.58  &0.17E-02  &1.67&0.000000 &1.000000 \\ \cline{2-8}
  &40  &0.85E-05  &2.49  &0.71E-03  &1.27 &0.000000 &1.000000\\ \cline{2-8}
  &80  &0.12E-05  &2.82  &0.11E-03  &2.73 &0.000000 &1.000000\\ \cline{2-8}
  &160 &0.16E-06  &2.88  &0.21E-04  &2.32 &0.000000 &1.000000 \\  \cline{2-8}
  &320  &0.22E-07  &2.87  &0.42E-05  &2.35&0.000000 &1.000000\\ \hline
\multirow{7}{*}{$P^3$}
  &10  &0.44E-04  & --   &0.24E-02  & -- &0.000000 &1.000000 \\  \cline{2-8}
  &20  &0.61E-05  &2.86  &0.84E-03  &1.54&0.000000 &1.000000 \\ \cline{2-8}
  &40  &0.26E-06  &4.56  &0.72E-04  &3.54&0.000000 &1.000000 \\ \cline{2-8}
  &80  &0.13E-07  &4.31  &0.49E-05  &3.89&0.000000 &1.000000 \\ \cline{2-8}
  &160 &0.79E-09  &4.05  &0.32E-06  &3.92 &0.000000 &1.000000 \\  \cline{2-8}
  &320  &0.50E-10  &3.98  &0.20E-07  &3.98&0.000000 &1.000000\\ \hline
  \end{tabular}
\label{table2}
\end{table}

We also tested the scheme with the following initial condition,
\begin{eqnarray}
\rho(0,x)=\left\{
\begin{array}{ll}
1, &\quad \mbox{if} \ x\in [0, 0.3] \cup [0.6, 1], \\
0, &\quad \mbox{otherwise}.
\end{array}\right.
\label{accu_init2}
\end{eqnarray}
Figure~\ref{fig_accu_dis} shows the RKDG solutions with $P^1$ solution space with and without BP limiter, superimposed over the exact solution.%The exact solution is obtained by the analytical method according to the shock wave and rarefaction wave propagating speeds.
The numerical solution obtained using the scheme without the BP limiter displays some oscillations with the solution outside the physical bounds, while the results obtained using the scheme with the BP limiter fall  inside the bounds, and approximate well the exact solution.
\begin{figure}[htb]
\begin{center}
\includegraphics[width=3.in]{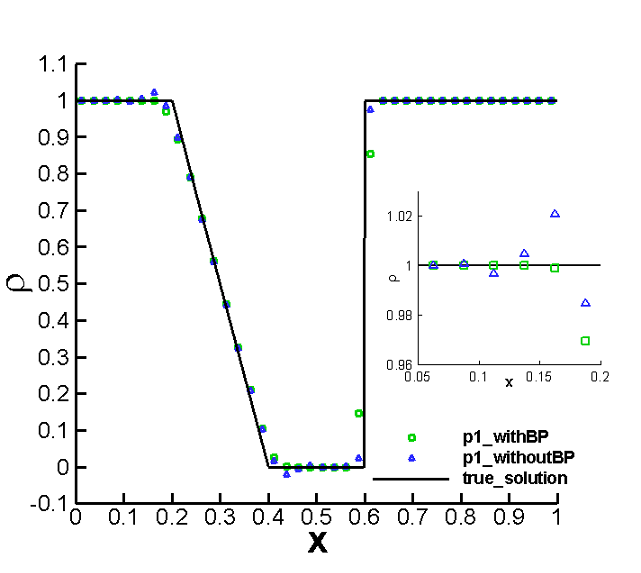}
\end{center}
\caption{Accuracy test with initial data~(\ref{accu_init2}). The numerical solution is produced by RKDG with $P^1$ polynomial space with cell size $1/40$.}
\label{fig_accu_dis}
\end{figure}

\subsection{Bottleneck}
The simplest traffic flow model on networks is represented by the bottleneck problem.
The conservation of cars is always expressed by~\eqref{trafficeq}, supplemented with initial and boundary conditions.
The bottleneck problem models  a road with different widths, hence different flux functions along different parts of the road. Denote the separation point between the two parts of the road by $S$.
We consider a road parametrized on $[0, 2]$ with $S=1$.
We may consider the road as composed of two different roads. Let $\rho_l$ be the traffic density to the left of $S$ on $[0, 1]$
(wider part) and $\rho_r$ be the traffic density to the right of $S$ on $[1, 2]$ (narrower part).
The wider part can be viewed as incoming road and the narrower part can be viewed as outgoing road.
The flux function $f_1(\rho)$ in the wider part is given by eq.~(\ref{eq: flux_function}),
while the flux function in the narrower part is given by
\begin{equation}
 f_2(\rho)=\rho(1-\frac{3}{2}\rho), \ \rho \in [0,2/3]. \notag
\end{equation}
The maximum of the fluxes is unique:
\begin{equation}
 f_1(\sigma_1)=\underset {[0,1]}{\max}\ f_1(\rho)=\frac{1}{4}, \ \mbox{with}\ \sigma_1=\frac{1}{2}; \quad
 f_2(\sigma_2)=\underset {[0,2/3]}{\max}\ f_2(\rho)=\frac{1}{6}, \ \mbox{with} \ \sigma_2=\frac{1}{3}.\notag
\end{equation}

We first consider the following initial and boundary data:
\begin{equation}
 \rho_1(t=0,x)=0.66, \ x\in [0, 1];  \quad \rho_2(t=0,x)=0.66,\ x\in [1, 2]; \quad
 \rho_{1,b}(t, x=0)=0.25.
 \label{testb1}
\end{equation}
The initial value $0.66$ is very close to the maximum value that can be absorbed by road 2, after a short time, e.g. at $T=0.5$, the formation of a traffic jam can be observed, see Figure~\ref{figtestb1}.

We then consider the following initial and boundary data:
\begin{equation}
%\begin{split}
 \rho_1(t=0,x)= 0, \ x\in[0, 1];  \quad  \rho_2(t=0,x)=0, \ x\in [1, 2]; \quad
 \rho_{1,b}(t,x=0)=0.4.
%\end{split}
\label{testb2}
\end{equation}
Since $\rho_{1,b}>\bar{\rho} \simeq 0.21$, there is a jam formation as showed by Figure~\ref{figtestb2}.
The results obtained here are comparable with those in~\cite{bretti2006numerical}.

Finally, we consider the following initial and boundary data:
\begin{equation}
 \rho_1(t=0,x)=0.4+0.2 \sin(5 \pi x),  \ x\in [0, 1]; \quad \rho_2(t=0,x)=0.66, \ x\in [1, 2]; \quad
 \rho_{1,b}(t,x=0)=0.25.
\label{testb3}
\end{equation}
The numerical results are presented in Figure~\ref{figtestb3}. In the presented numerical results, DG solutions with $P^1$ and $P^2$ polynomial spaces have better performance and less numerical diffusion with relatively coarse mesh size compared with the first order scheme.

\begin{figure}[htb]
\begin{center}
\includegraphics[width=2.in]{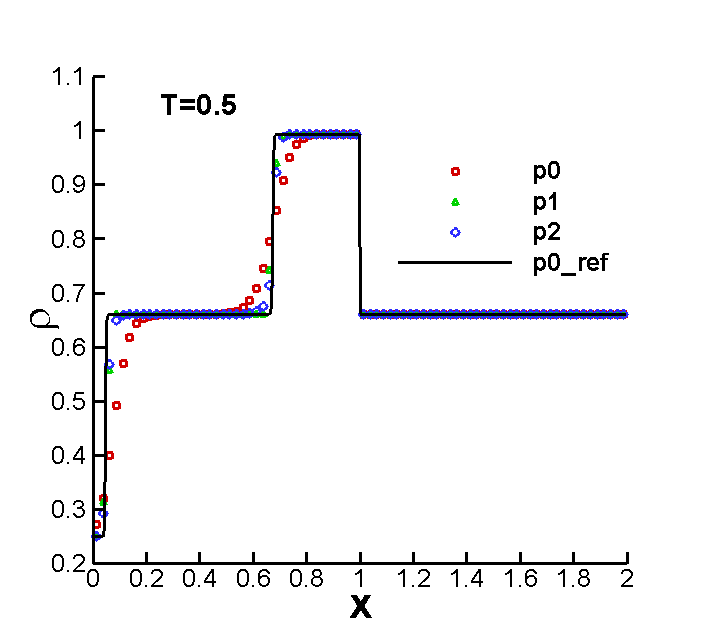}
\includegraphics[width=2.in]{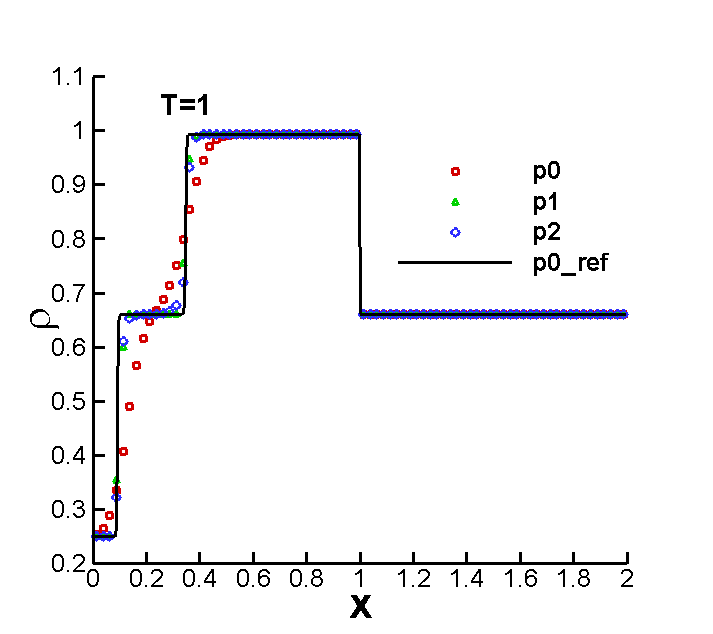}
\includegraphics[width=2.in]{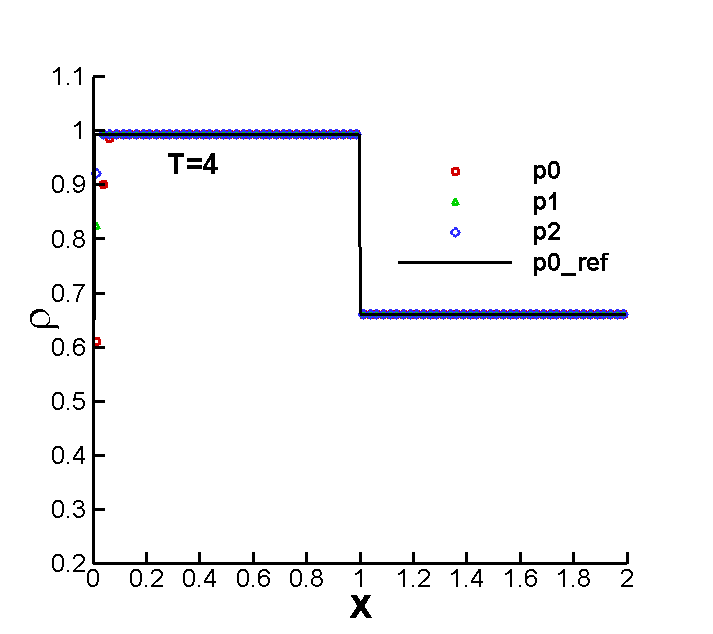}
\end{center}
\caption{Bottleneck problem with initial and boundary data~(\ref{testb1}).
%The numerical solutions at difference time are produced by RKDG with $P^0$, $P^1$ and $P^2$ polynomial spaces with cell size $1/40$. The reference solution $P_{ref}^0$ are obtained by first order RKDG (finite volume method) using $1600$ grid points.
}
\label{figtestb1}
\end{figure}

\begin{figure}[htb]
\begin{center}
\includegraphics[width=2.in]{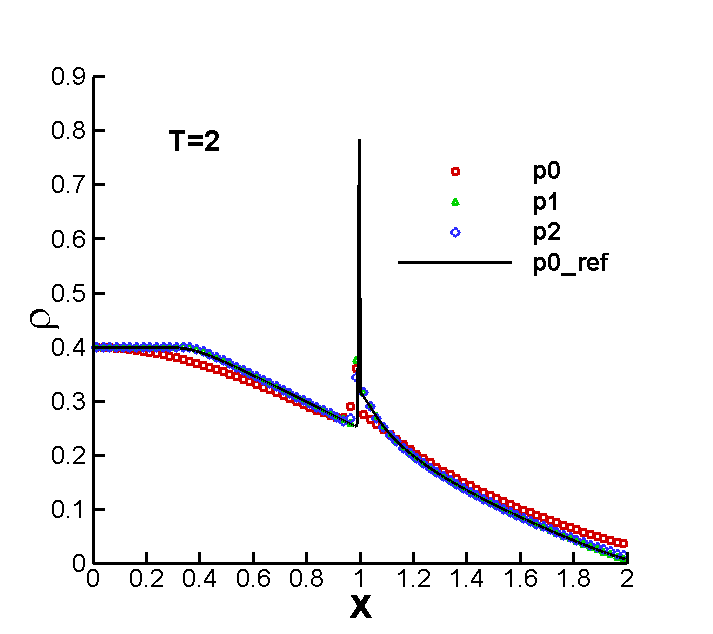}
\includegraphics[width=2.in]{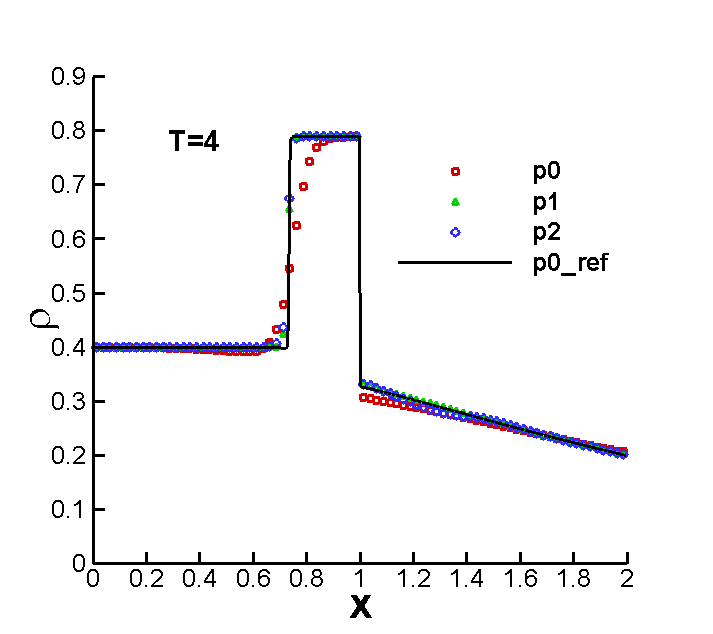}
\includegraphics[width=2.in]{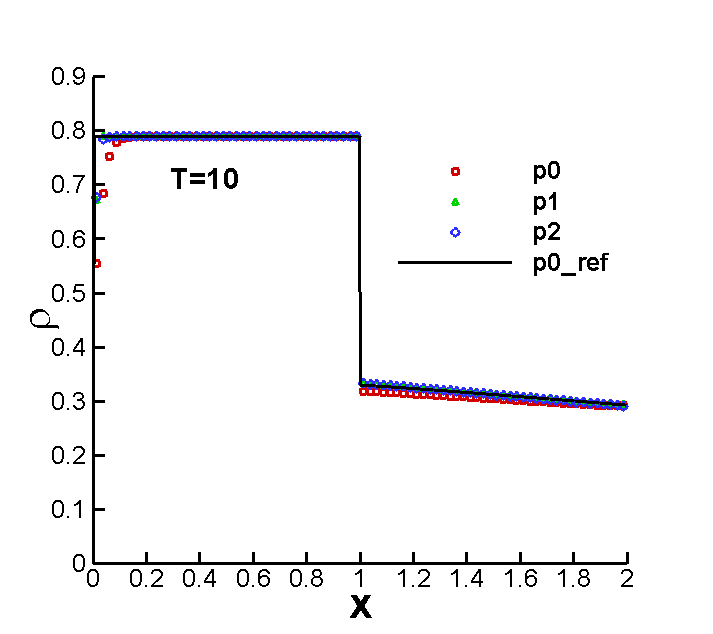}
\end{center}
\caption{
Bottleneck problem with initial and boundary data~(\ref{testb2}). %Simulation parameters are the same as those in Figure~\ref{figtestb1}.
%The numerical solutions at difference times are produced by RKDG with $P^0$, $P^1$ and $P^2$ polynomial spaces with cell size $0.025$. The reference solution $P_{ref}^0$ are obtained by first order RKDG (finite volume method) using $1600$ grid points.
}
\label{figtestb2}
\end{figure}

\begin{figure}[htb]
\begin{center}
\includegraphics[width=2.in]{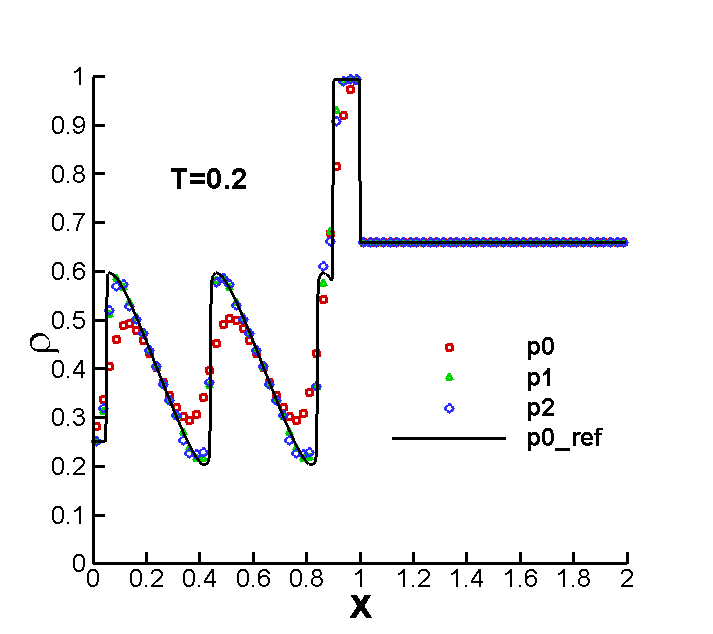}
\includegraphics[width=2.in]{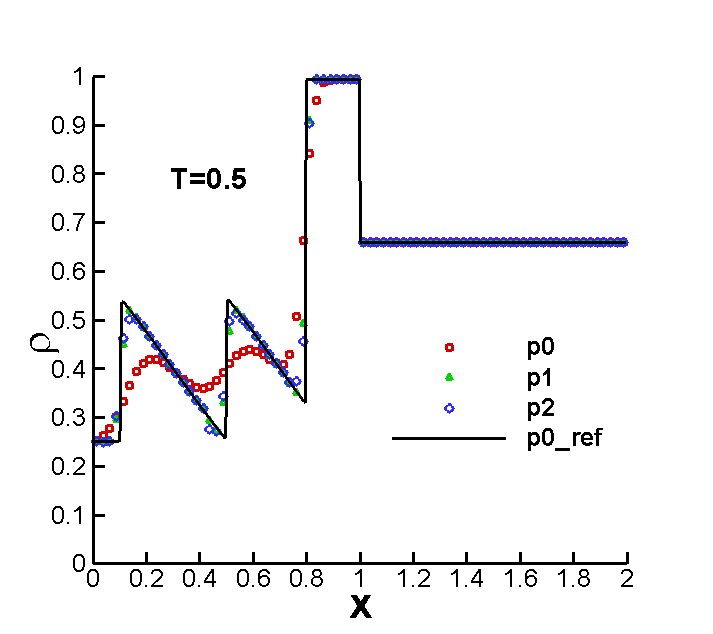}
\includegraphics[width=2.in]{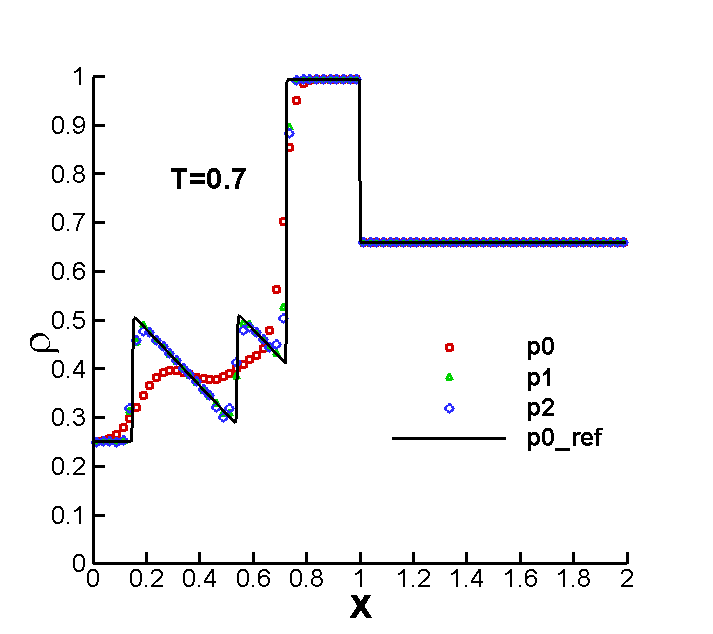}
\end{center}
\caption{
Bottleneck problem with initial and boundary data~(\ref{testb3}). %Simulation parameters are the same as those in Figure~\ref{figtestb1}.
%The numerical solutions at difference times are produced by RKDG with $P^0$, $P^1$ and $P^2$ polynomial spaces with cell size $0.025$. The reference solution $P_{ref}^0$ are obtained by first order RKDG (finite volume method) using $1600$ grid points.
}
\label{figtestb3}
\end{figure}

\subsection{Two incoming roads and one outgoing road}
Consider a crossing with two incoming roads and one outgoing road, all parametrized by $[0,1]$, with a fixed ``right of way parameter'' $q \in [0,1]$.
The incoming roads are denoted by 1 and 2, while the outgoing road is denoted by 3.
The flux function is given by the equation~(\ref{eq: flux_function}).

We test the scheme with the following initial and boundary data:
\begin{eqnarray}
&\rho_1(0,x)=
\left\{
\begin{array}{ll}
0.1, &\quad \mbox{if} \ x\in [0, 0.2] \cup [0.4, 0.6] \cup [0.8, 1], \\
0.2, &\quad \mbox{otherwise},
\end{array}
\right. \quad \rho_3(t=0,x)=0.1, \\ \notag
&\rho_2(t=0,x)=0.1+0.05 \sin(5 \pi x), \quad \rho_{1,b}(t,x=0)=0.1, \quad \rho_{2,b}(t,x=0)=0.1.
\label{twoone2}
\end{eqnarray}
We take $q=0.5$, see Figure~\ref{twoone_step}. Similar to the previous example, higher order schemes have better performance in resolving solution structures than lower order schemes.

 \begin{figure}[htb]
\begin{center}
\includegraphics[width=2.in]{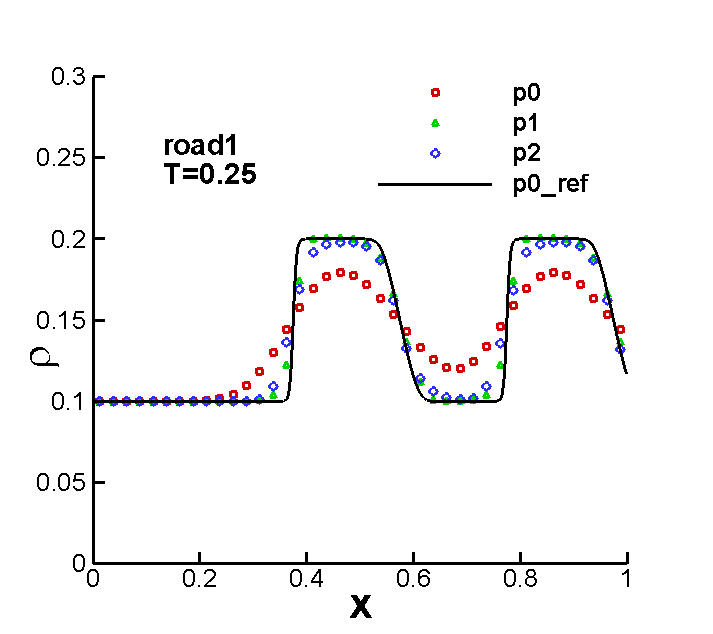}
\includegraphics[width=2.in]{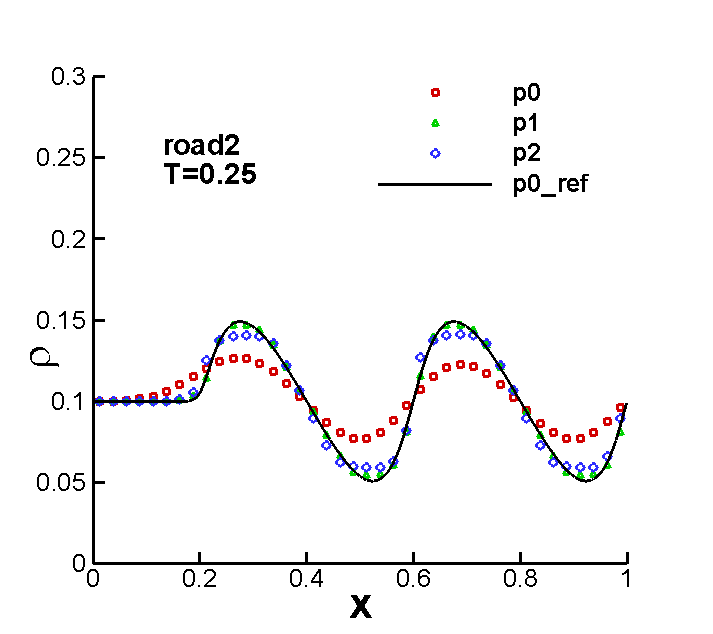}
\includegraphics[width=2.in]{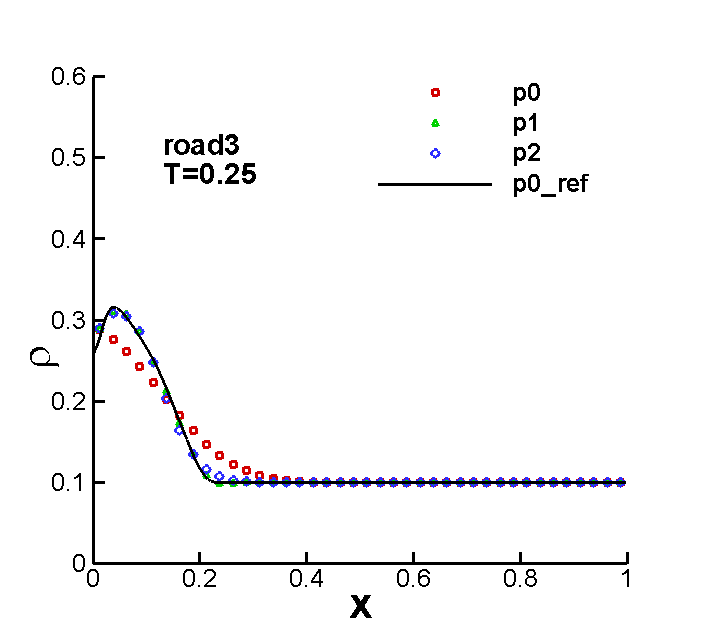}
\includegraphics[width=2.in]{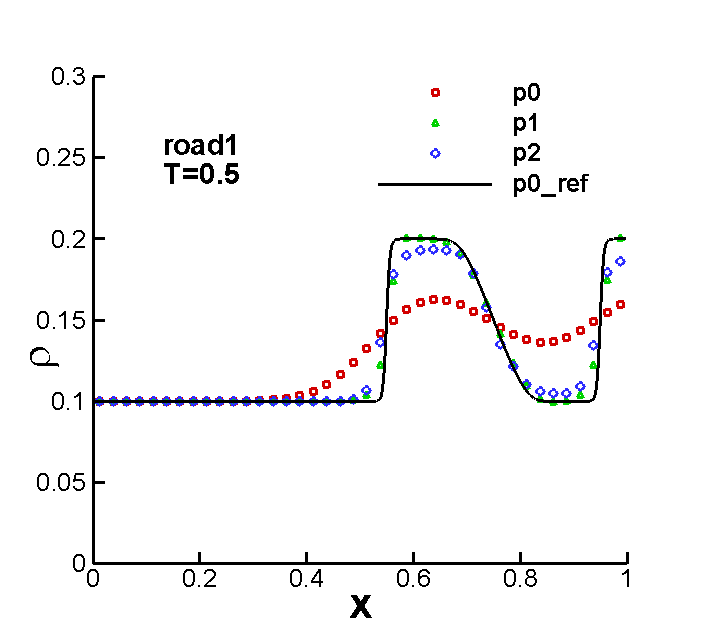}
\includegraphics[width=2.in]{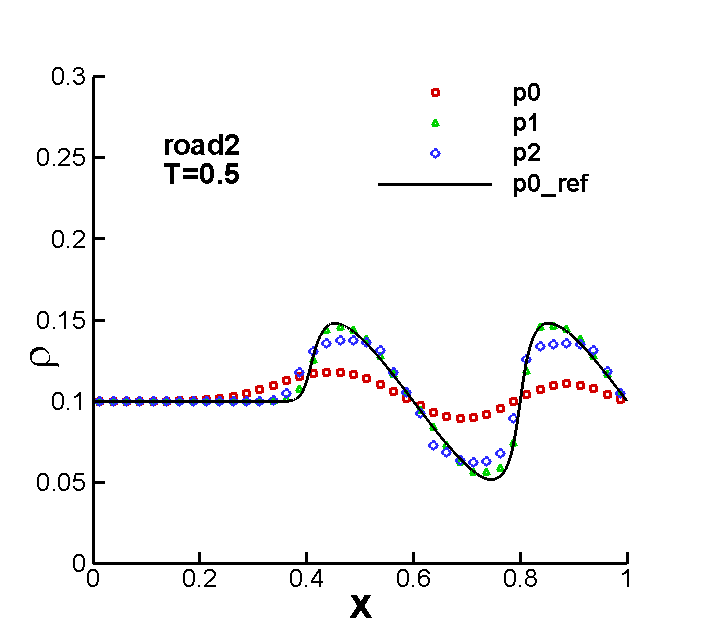}
\includegraphics[width=2.in]{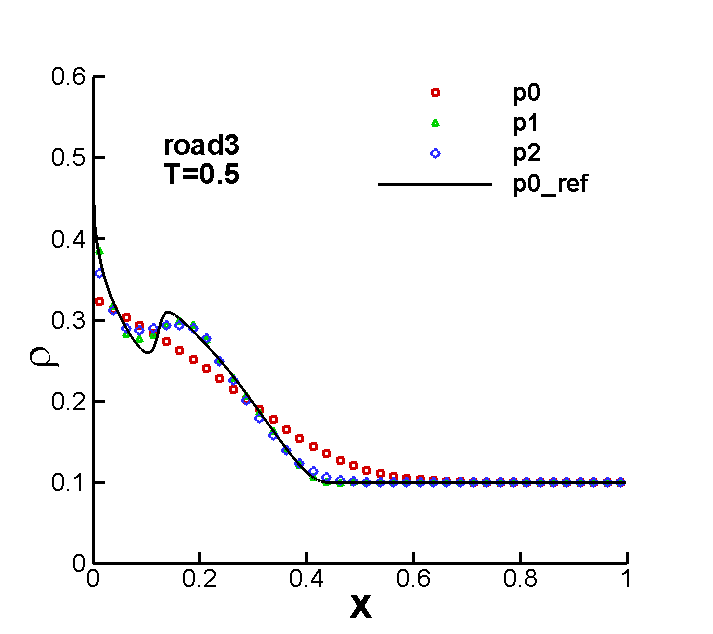}
\includegraphics[width=2. in]{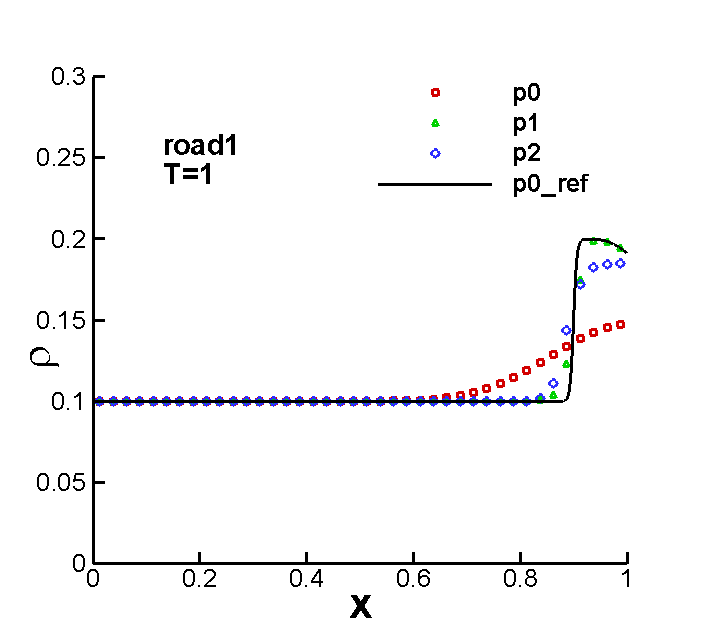}
\includegraphics[width=2. in]{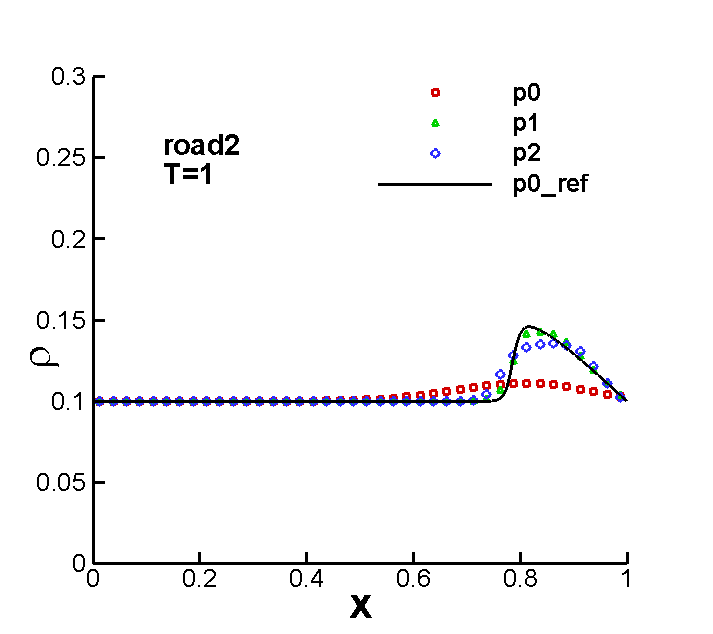}
\includegraphics[width=2. in]{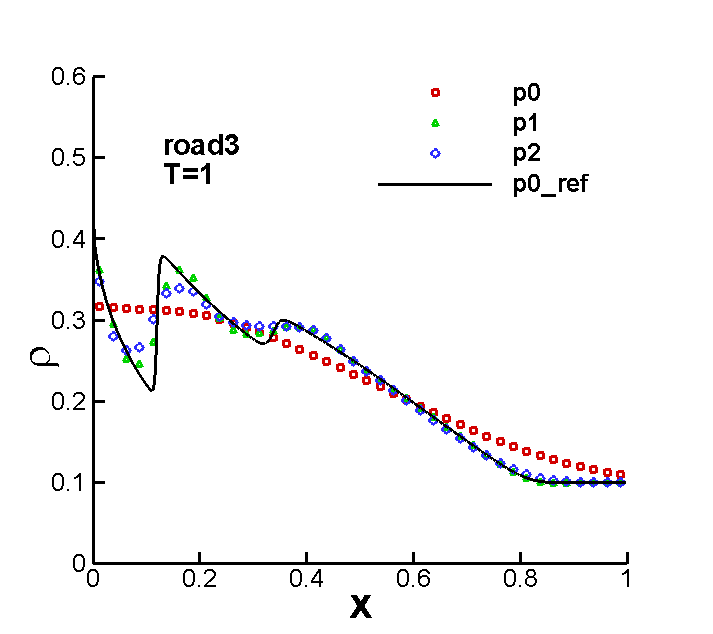}
\end{center} \caption{Two incoming and one outgoing roads with initial and boundary data~(\ref{twoone2}), $q=0.5$, $T=0.25, 0.5, 1$.}
%The numerical solutions were produced by the RKDG method with $P^k$ ($k=0, 1, 2$) polynomial spaces and the cell size is $1/40$. The reference solution $P_{ref}^0$ is obtained using 1600 grid points using the first order method.}
%Simulation parameters are the same as those in Figure~\ref{twoone_q_05_t=10}.}
\label{twoone_step}
\end{figure}

\subsection{Two incoming and two outgoing roads}
Here we consider the particular case of a junction with two incoming and two outgoing roads. The incoming roads are denoted by 1 and 2,
while the outgoing roads are 3 and 4.
Two incoming and two outgoing roads are parametrized by the interval $[0,1]$.
The flux function is given by equation~(\ref{eq: flux_function}).
%For roads $i=1,2$ the maximum fluxes at junction point are performed following the rules~(\ref{fluxUno}),
%while for roads $j=3,4$ the maximum fluxes at junction point are performed following the rules~(\ref{fluxDue}).
%The numerical flux at the junction point is obtained by eq.~(\ref{twotwoflux}).
%\begin{equation*}
%\gamma_i^{max}=\begin{cases}
%						 f(\rho_{i,0}), \quad &\mbox{if}\ \rho_{i,0} \le \sigma, \\
%						 f(\sigma),     \quad &\mbox{if}\ \rho_{i,0} \ge \sigma,	
%					\end{cases}
%\end{equation*}
%\begin{equation*}
 %\gamma_j^{max}=\begin{cases}
%						f(\sigma),  \quad &\mbox{if}\ \rho_{j,0} \le \sigma,	 \\
%						f(\rho_{j,0}),	\quad &\mbox{if}\ \rho_{j,0} \ge \sigma.
%					\end{cases}
%\end{equation*}
The traffic distribution matrix is~(\ref{MatrixA}). Let $\alpha = 0.4$, $\beta = 0.3$ in our simulations.
%\[
%A= \left( \begin{matrix}  \alpha_{31}&\alpha_{32}\\ \alpha_{41}&\alpha_{42} \end{matrix} \right).
%\]

%Here the coefficients of the distribution matrix $A$ are such that
%$0<\alpha_{32}<\alpha_{31}<1/2$. Let $\alpha_{31}=\alpha$, $\alpha_{32}=\beta$, $\alpha_{41}=1-\alpha$, $\alpha_{42}=1-\beta$.
%And we introduce the notation
%$\rho_1(0,x)=\rho_{1,0}$, $\rho_2(0,x)=\rho_{2,0}$, $\rho_3(0,x)=\rho_{3,0}$, $\rho_4(0,x)=\rho_{4,0}$.

We take the same constant initial and boundary data as in~\cite{bretti2006numerical},
\begin{eqnarray}
&\rho_2(0,x)=\rho_3(0,x)=\rho_{2,b}(t)=0.82732683535; \quad \rho_4(0,x)=0.5; \quad
\rho_{1,b}(t)=0.4;\\ \notag
&\rho_1(0,x)=\begin{cases}
						0.4, 	 	 &\quad \mbox{if}\  0\le  x \le 0.5, \\
						\rho_{1,0},  &\quad \mbox{otherwise}.	
					\end{cases}
\label{twotwo1}
\end{eqnarray}
In the panels shown in Figure~\ref{twotwo_t470}, we present numerical solutions on road 1 and road 3 at different times; the results are comparable to those produced in~\cite{bretti2006numerical}.
Higher order RKDG schemes are observed to have better performance than the first order scheme.
%\begin{figure}[ht]
%\begin{center}
%\includegraphics[width=3.in]{}
%\includegraphics[width=3.in]{}
%\end{center} \caption{Two incoming and two outgoing roads, T=0.}
%\label{twotwo_t0}
%\end{figure}

\begin{figure}[ht]
\begin{center}
\includegraphics[width=2.in]{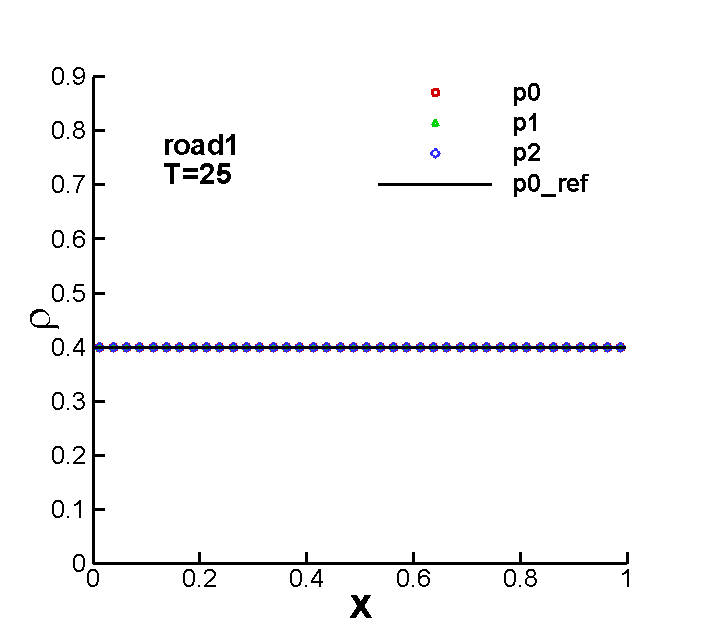}
\includegraphics[width=2.in]{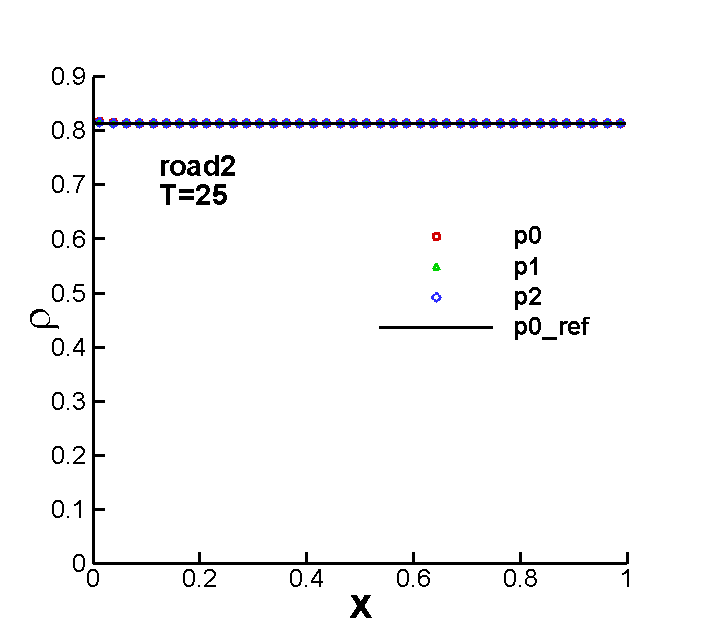}
\includegraphics[width=2.in]{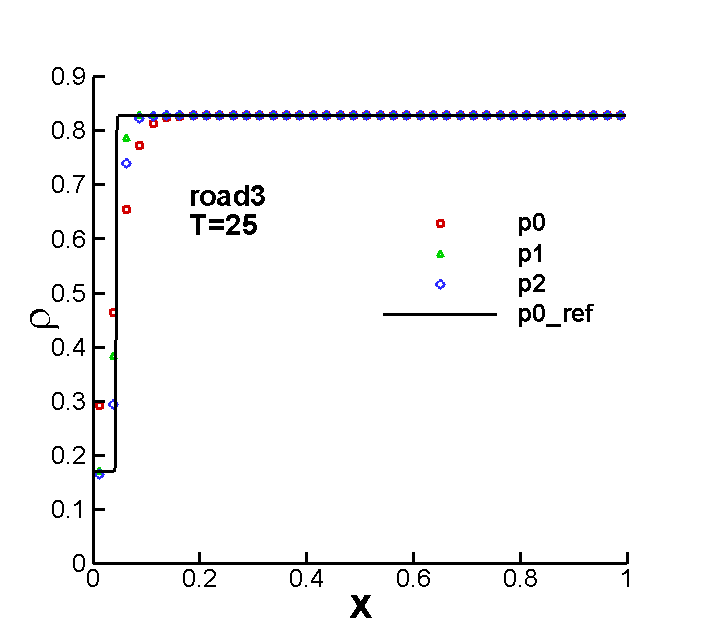}
\includegraphics[width=2.in]{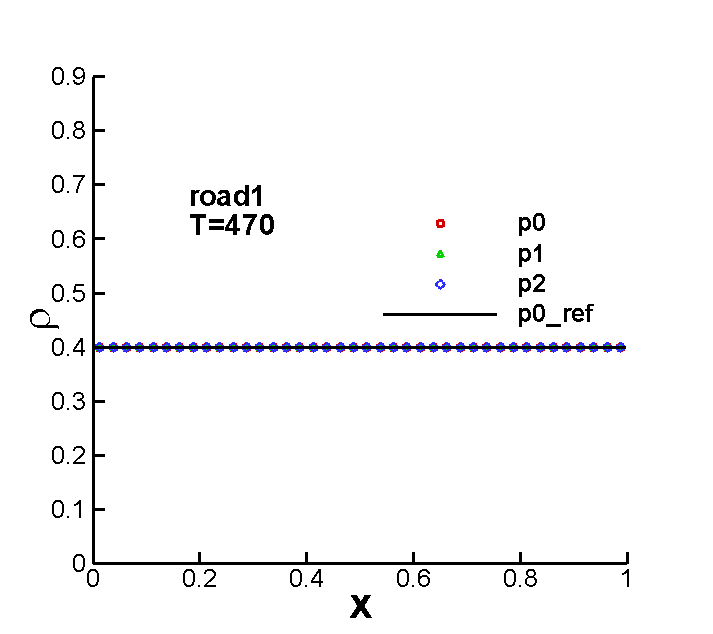}
\includegraphics[width=2.in]{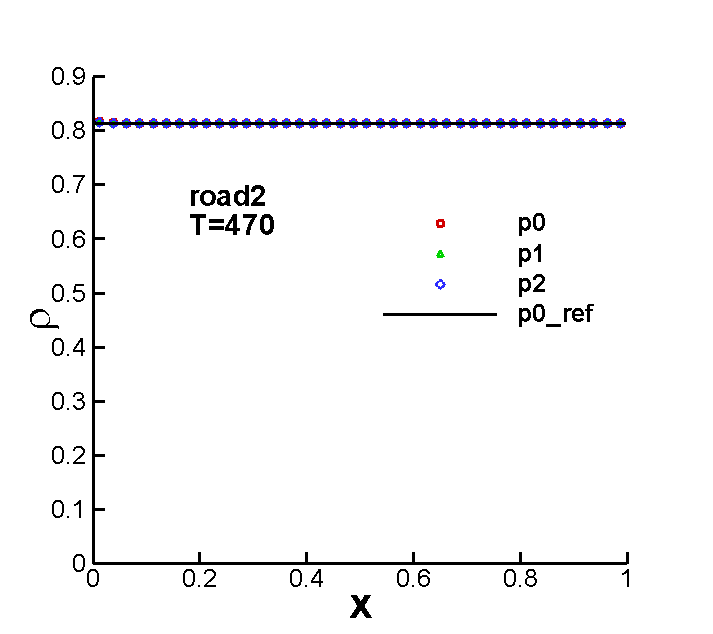}
\includegraphics[width=2.in]{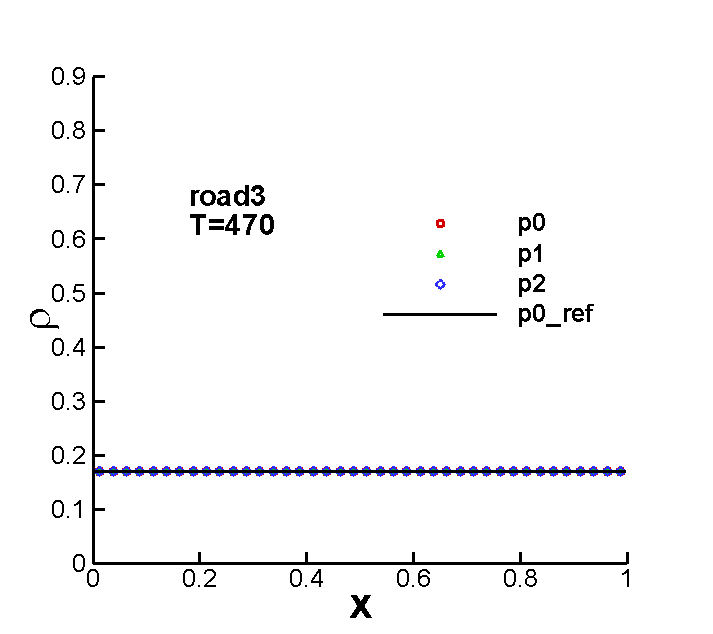}
\end{center} \caption{Two incoming and two outgoing roads with initial and boundary data~(\ref{twotwo1}), $T= 25$ (upper row) and $T=470$ (bottom row). %\QQ{please add the figure for road 2}
%Simulation parameters are the same as those in Figure~\ref{twoone_step}.
}
\label{twotwo_t470}
\end{figure}

We then test another example with the following initial and boundary data:
\beq
\begin{split}
\rho_2(0,x)=0.2+0.1 \sin (5 \pi x); \quad \rho_3(0,x)=\rho_4(0,x)=0.5; \quad
\rho_{1,b}(t)=\rho_{2,b}(t)=0.2;\\
\rho_1(0,x)=\begin{cases}
						0.2, &\quad \mbox{if} \ x \in [0, 0.2] \cup [0.4, 0.6] \cup [0.8, 1],  \\
						0.4, &\quad \mbox{otherwise}.	
					\end{cases}
\end{split}
\label{twotwo2}
\eeq
As can be seen from Figure~\ref{twotwo_step}, RKDG methods with $P^1$ and $P^2$ solution spaces approximate reference solution very well, compared with that from the first order scheme, which has been greatly smeared due to numerical diffusions.
\begin{figure}[ht]
\begin{center}
\includegraphics[width=2.in]{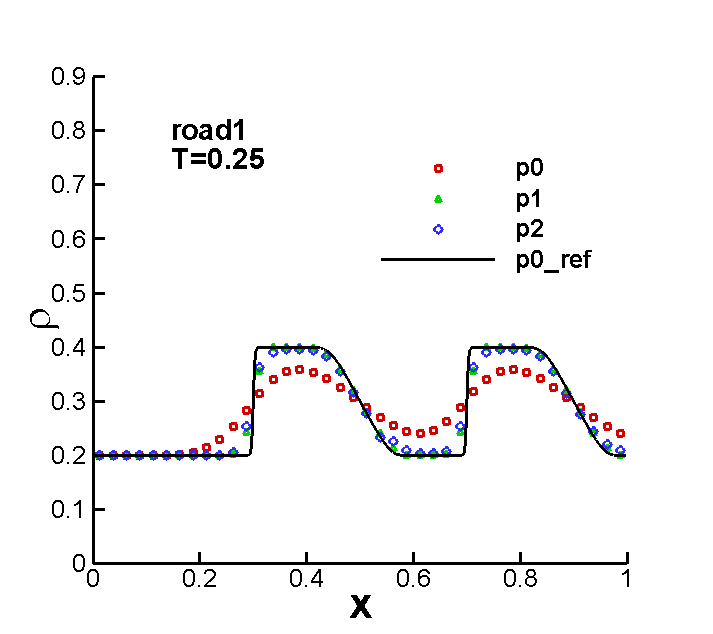}
\includegraphics[width=2.in]{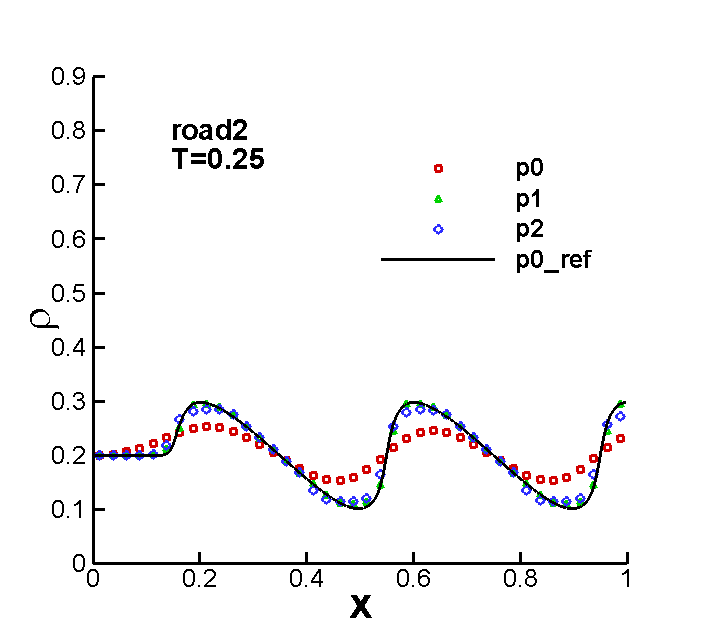}
\includegraphics[width=2.in]{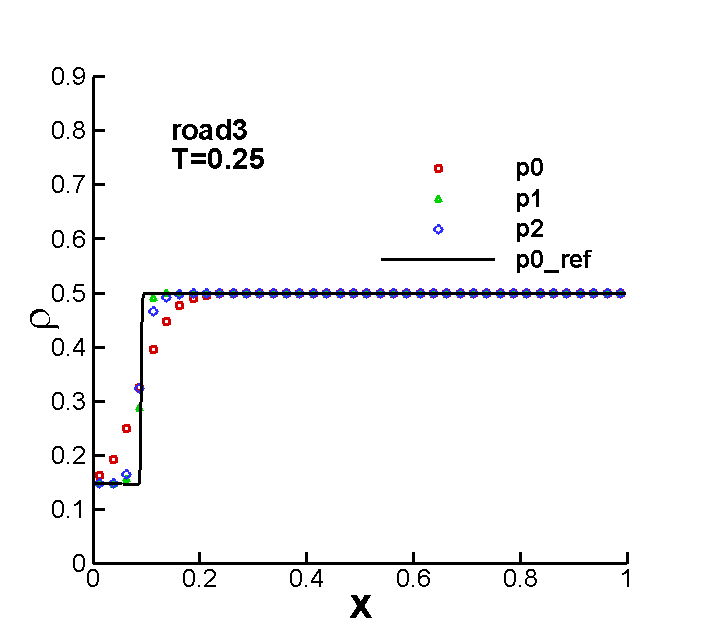}
\includegraphics[width=2.in]{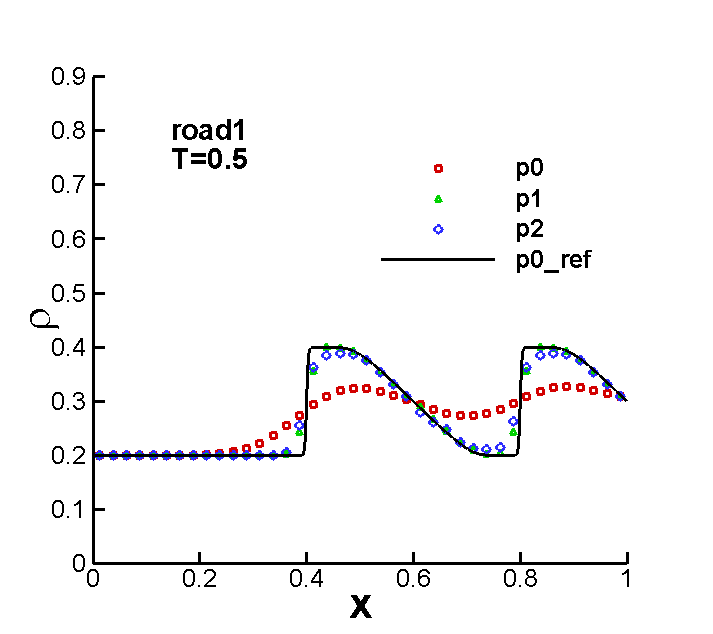}
\includegraphics[width=2.in]{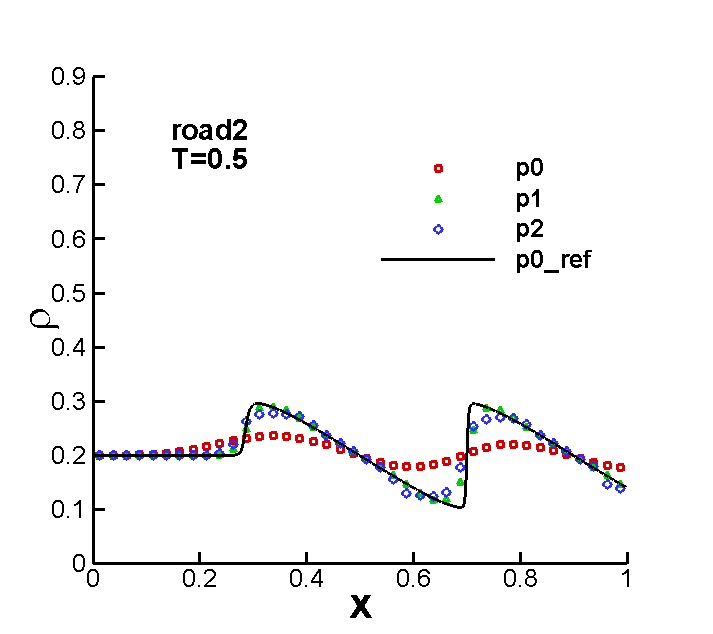}
\includegraphics[width=2.in]{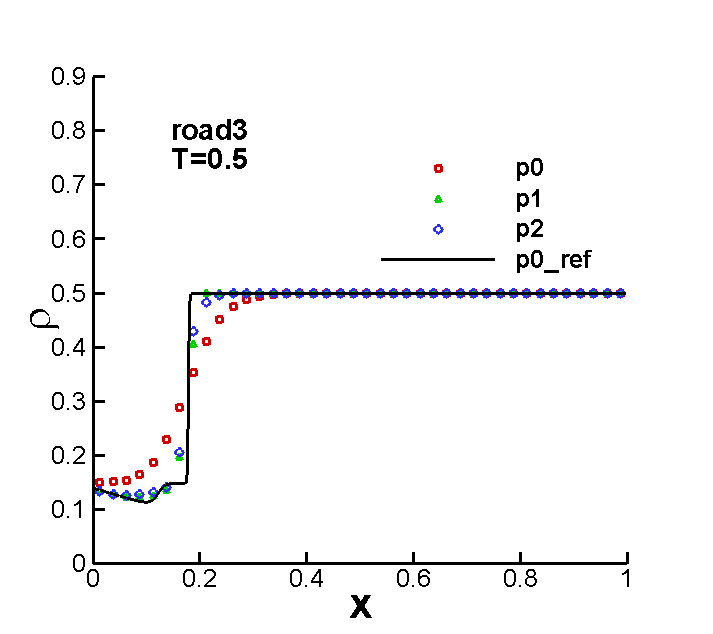}
\includegraphics[width=2.in]{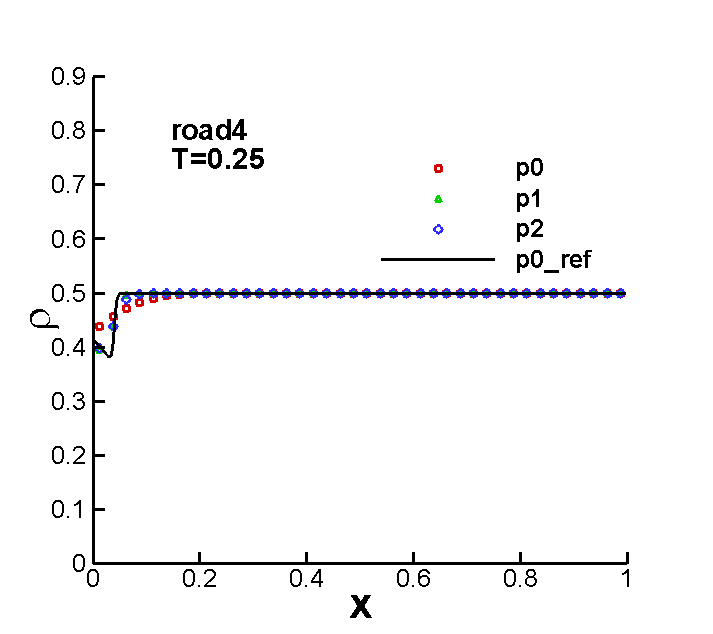}
\includegraphics[width=2.in]{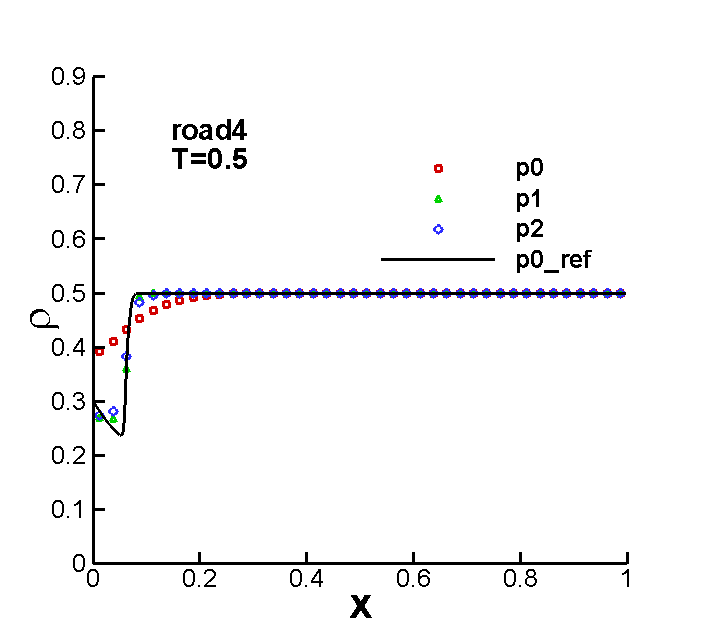}
\end{center} \caption{Two incoming and two outgoing roads with initial and boundary data~(\ref{twotwo2}), $T=0.25, 0.5$. %\QQ{please add the figure for other roads}
%Simulation parameters are the same as those in Figure~\ref{twoone_step}.
}
\label{twotwo_step}
\end{figure}

\subsection{Traffic Circles}
In this part we present some simulations reproducing a simple traffic circle composed of 8 roads and 4 junctions, as shown in Figure~\ref{trafficcircle}. Consider the following initial and boundary data:
\beq
\begin{split}
\rho_3(0,x)=\rho_4(0,x)=\rho_{1R}(0,x)=\rho_{2R}(0,x)=\rho_{3R}(0,x)=\rho_{4R}(0,x)=0.5; \\
\rho_{1,b}(t)=0.25; \quad \rho_{2,b}(t)=0.4; \quad \rho_2(0,x)=0.2+0.2 \sin(5 \pi x);\\
\rho_1(0,x)=\begin{cases}
						0.25, &\quad	 \mbox{if} \ x\in [0, 0.2] \cup [0.4, 0.6] \cup [0.8, 1], \\
						0.35, &\quad	 \mbox{otherwise}.
					\end{cases}
\end{split}
\label{traffic_cir}
\eeq

The distribution coefficients, namely $(\alpha_{1R,3}, \alpha_{1R,2R}, \alpha_{3R,4}, \alpha_{3R,4R})$ are assumed to be constant and are all equal to $\alpha=0.5$. Let us choose the following priority
parameters, with $q_1=q(1,4R,1R)=0.25$,  $q_2=q(2,2R,3R)=0.25$. The fixed values imply that road $4R$ is the through street with respect to road 1, and road $2R$ is the through street with respect to road 2.
The roads 1, 4R; 1R and 2, 2R; 3R are two incoming and one outgoing roads, while the roads 1R; 2R, 3 and 3R; 4R, 4 are one incoming and two outgoing roads respectively.
%The initial data is plotted in figure \ref{trafficcirclesinit}.
In Figures~\ref{trafficcircle_t05},~\ref{trafficcircle_t1}, we present numerical solutions on all roads at $T=0.5, 1$. Higher order RKDG schemes are observed to have better performance than the first order scheme.
%Observe that at time $t=5$ shocks are
%generated on the entering Roads 1 and 2, while rarefaction waves in the direction of traffic are created on Roads $4R, 2R, 3, 4$. Roads $1R$ and $3R$ do not change the level of the density.
%At $t=10$ rarefaction waves travelling in the sense of traffic produce a decrease in the car density on Roads $4R, 3R, 3, 4$. On entering Roads 1 and 2 the effect of shocks travelling backwards is a
%considerable increase of the density and again Roads $1R$ and $3R$ have the same configuration which corresponds to the maximum flux.

\begin{figure}[ht]
\begin{center}
\includegraphics[width=3.in]{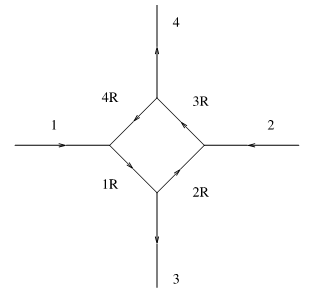}
\end{center} \caption{Traffic circle}
\label{trafficcircle}
\end{figure}

%\begin{figure}[ht]
%\begin{center}
%\includegraphics[width=3.in]{}
%\end{center} \caption{Traffic circle with $q_1=q_2=0.25$, initial distribution}
%\label{trafficcirclesinit}
%\end{figure}

 \begin{figure}[ht]
\begin{center}
\includegraphics[width=2.in]{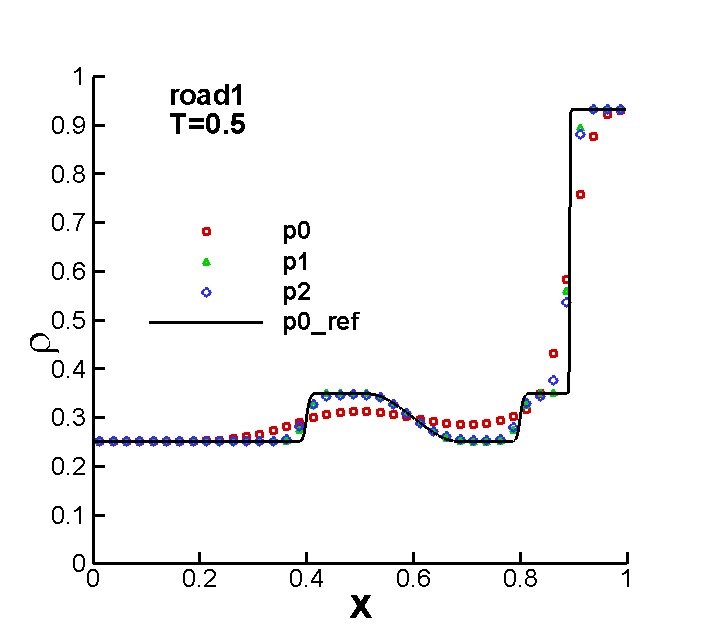}
\includegraphics[width=2.in]{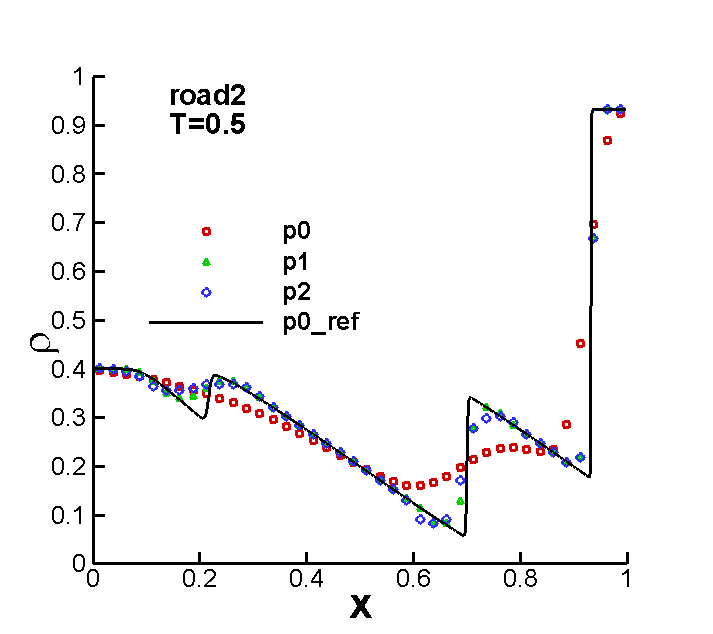}
\includegraphics[width=2.in]{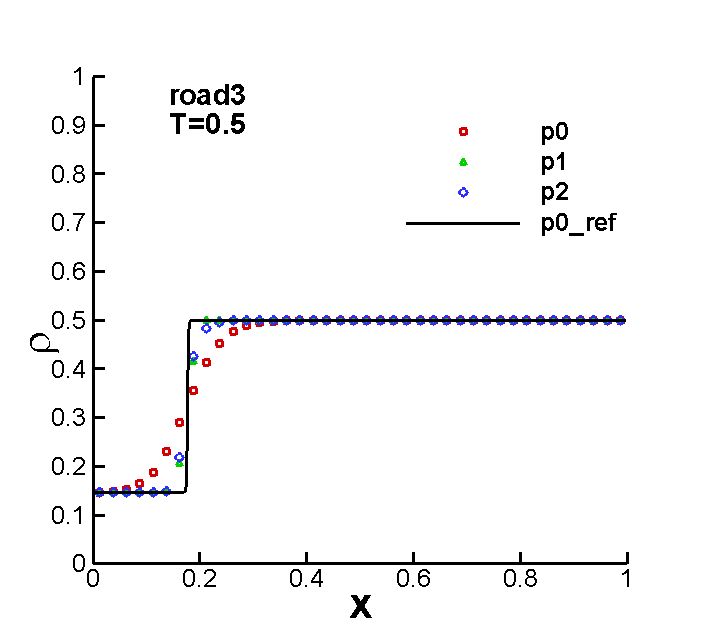}
\includegraphics[width=2.in]{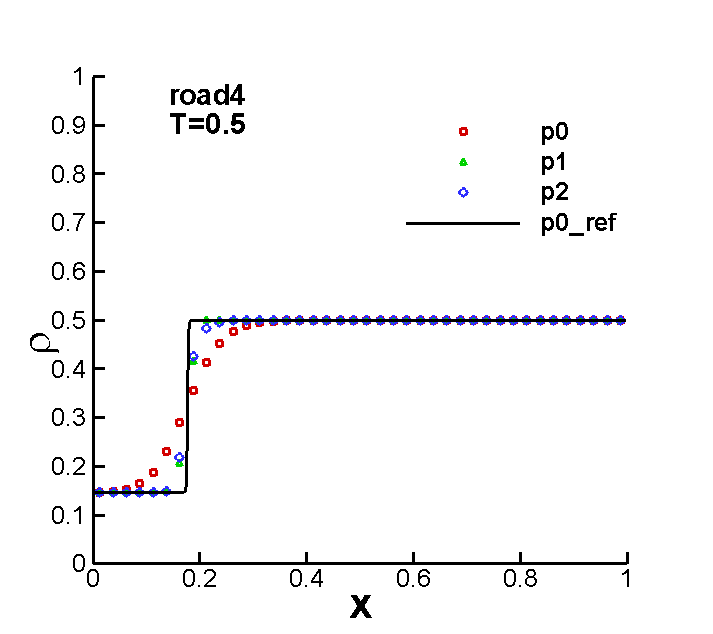}
\includegraphics[width=2.in]{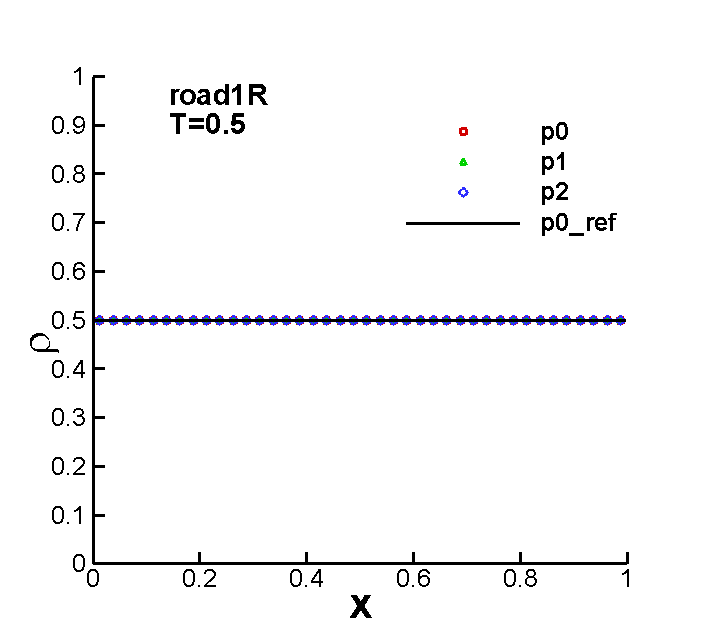}
\includegraphics[width=2.in]{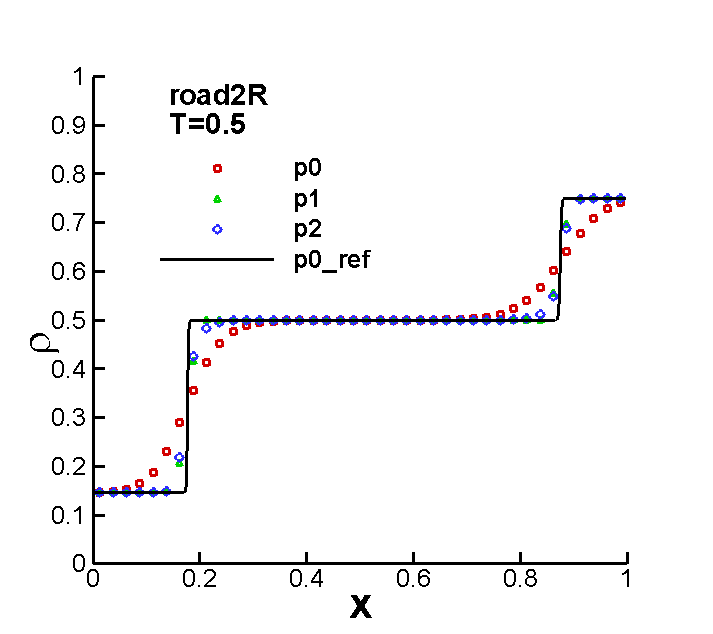}
\includegraphics[width=2.in]{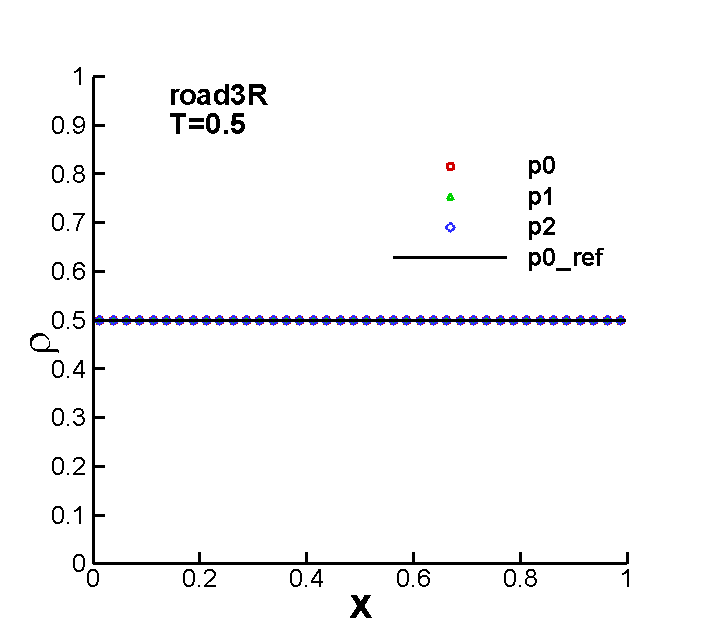}
\includegraphics[width=2.in]{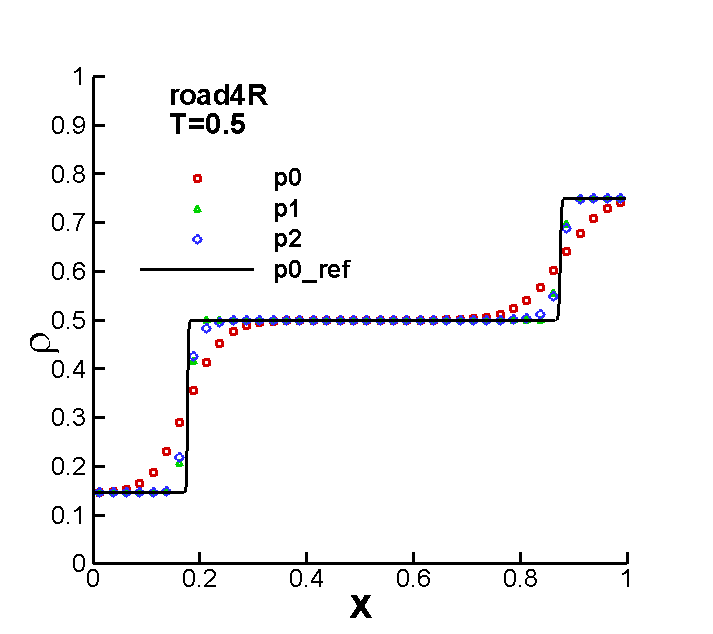}
\end{center} \caption{Traffic circle with initial and boundary data~(\ref{traffic_cir}), $q_1=q_2=0.25$, T=0.5, road 1, 2, 3, 4, 1R, 2R, 3R, 4R.}
\label{trafficcircle_t05}
\end{figure}

 \begin{figure}[ht]
\begin{center}
\includegraphics[width=2.in]{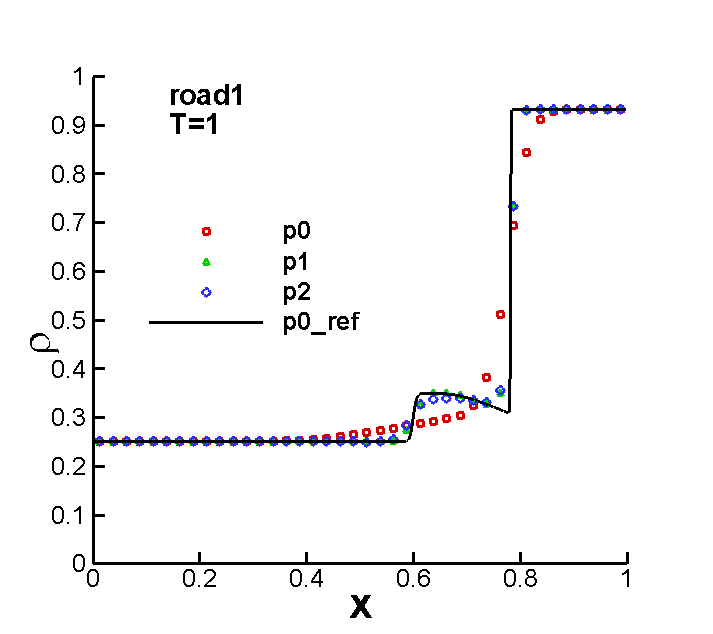}
\includegraphics[width=2.in]{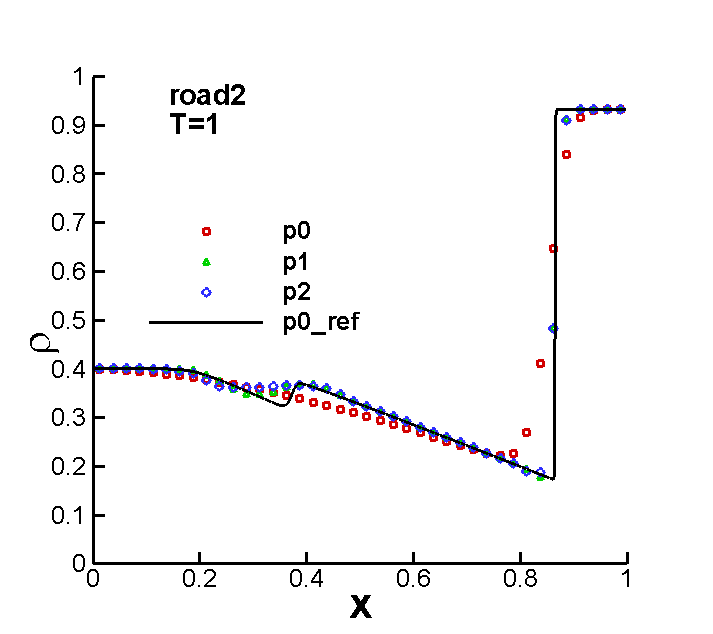}
\includegraphics[width=2.in]{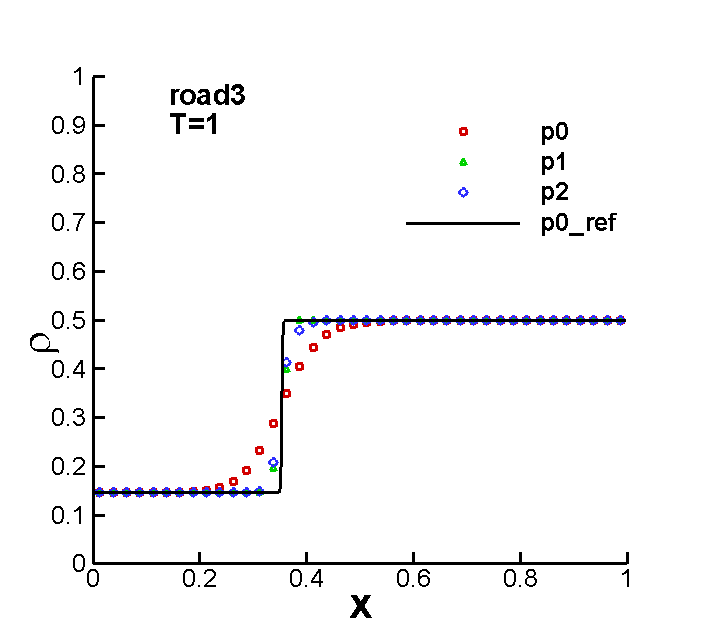}
\includegraphics[width=2.in]{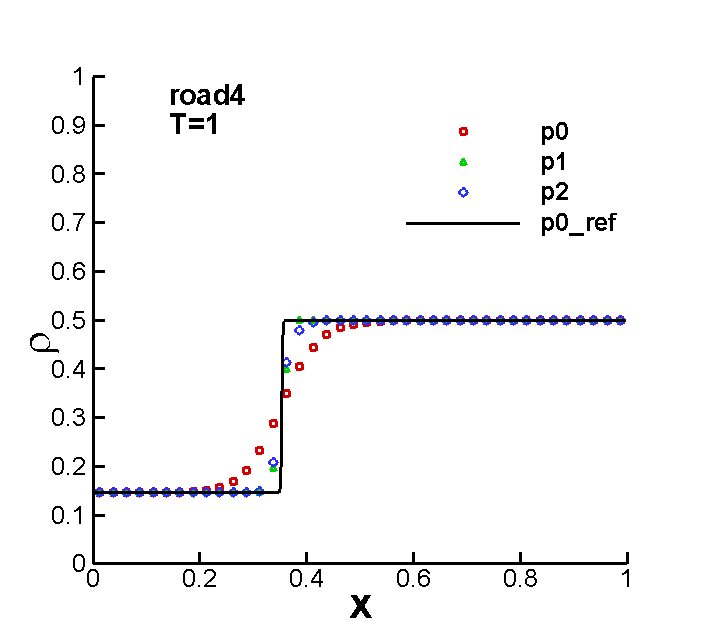}
\includegraphics[width=2.in]{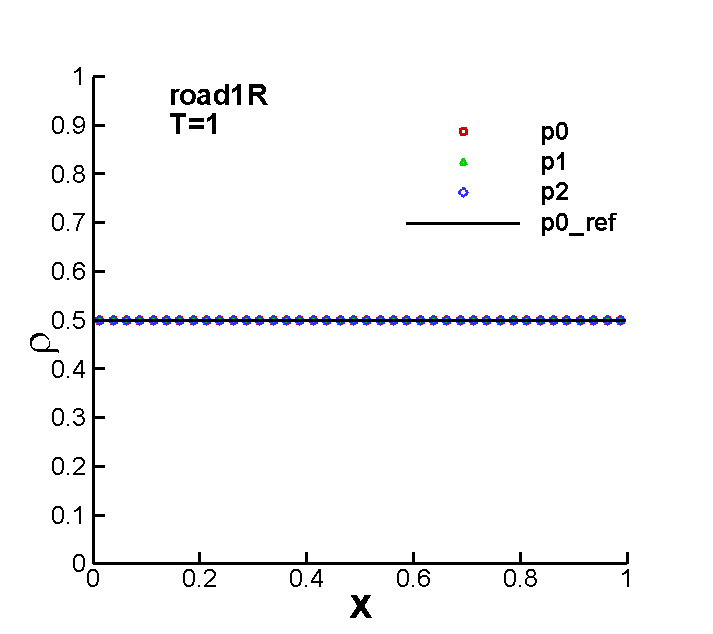}
\includegraphics[width=2.in]{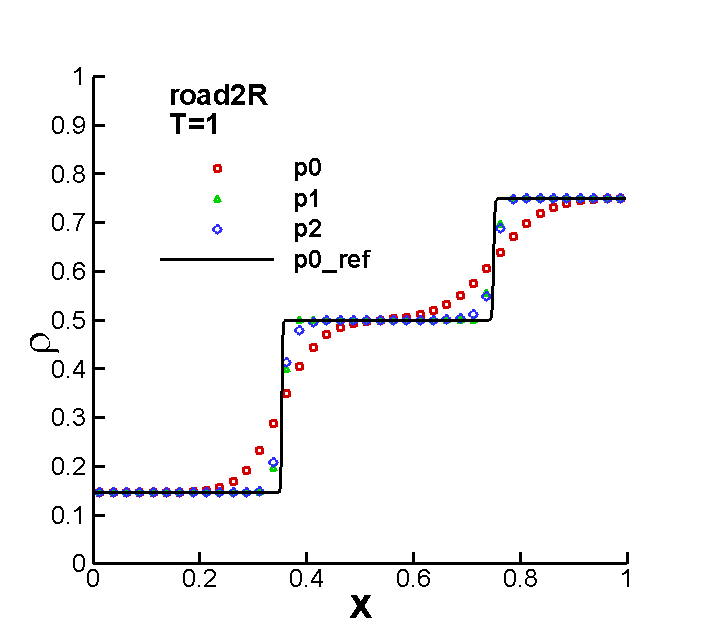}
\includegraphics[width=2.in]{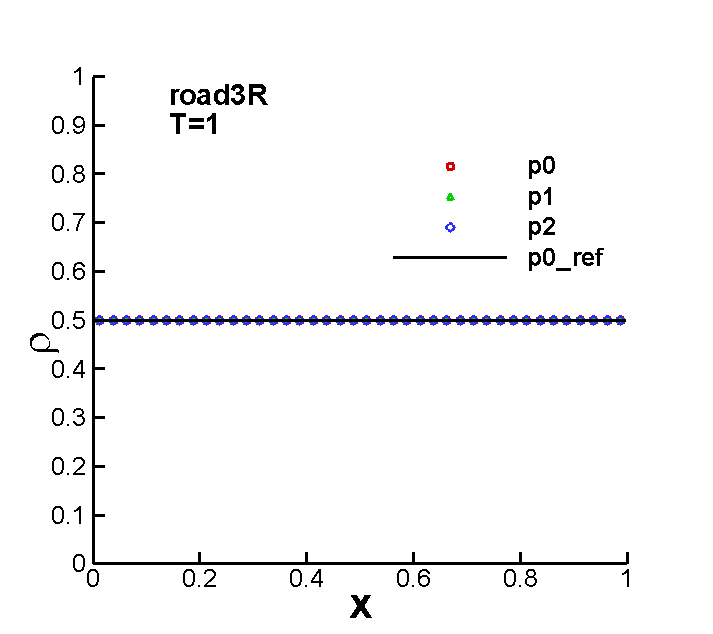}
\includegraphics[width=2.in]{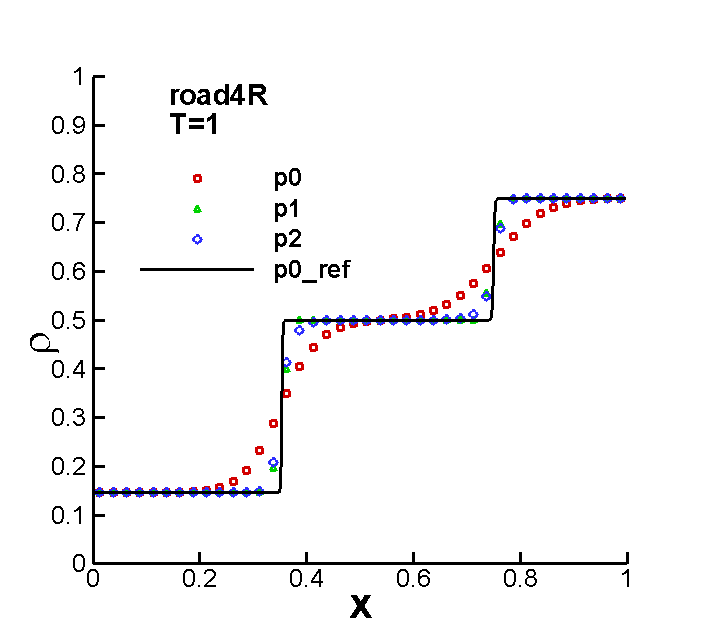}
\end{center} \caption{Traffic circle with initial and boundary data~(\ref{traffic_cir}), $q_1=q_2=0.25$, T=1, road 1, 2, 3, 4, 1R, 2R, 3R, 4R.}
\label{trafficcircle_t1}
\end{figure} 

\section{Conclusion}
\label{conclusion}

{
In this paper, we proposed a bound-preserving, high order RKDG method for hyperbolic network problems with traffic flow applications. 
Compared with other existing higher order methods, DG methods are compact in the sense that only direct neighbors are used to update
the solution on one element. 
Such a property offers great convenience in handling boundary conditions at junctions with high order accuracy. 
This was demonstrated on several examples, including those involving solutions with rich solution structures, where 
higher-order accuracy at edges and vertices of the traffic flow network provided superior solution quality in comparison with 
the classical first-order methods, while keeping low computational complexity when dealing with coupling conditions at vertices.  
Extensions of the proposed algorithm to {\sl systems} of hyperbolic conservation laws on networks,
such as those, e.g., in \cite{CanicStent1,CanicStent2}, are subject to future investigations. 
}

\bibliographystyle{siam}
\bibliography{refer}

\begin{thebibliography}{10}

\bibitem{AwRascle2000aa}
{\sc A.~Aw and M.~Rascle}, {\em Resurection of second order models of traffic
  flow}, SIAM J. Appl. Math., 60 (2000), pp.~916--944.

\bibitem{bretti2006numerical}
{\sc G.~Bretti, R.~Natalini, and B.~Piccoli}, {\em Numerical approximations of
  a traffic flow model on networks}, NHM, 1 (2006), pp.~57--84.

\bibitem{cockburn2000development}
{\sc B.~Cockburn, G.~E. Karniadakis, and C.-W. Shu}, {\em The development of
  discontinuous Galerkin methods}, Springer, 2000.

\bibitem{cockburn1989tvb}
{\sc B.~Cockburn and C.-W. Shu}, {\em {TVB Runge-Kutta local projection
  discontinuous Galerkin finite element method for conservation laws. II.
  General framework}}, Mathematics of Computation, 52 (1989), pp.~411--435.

\bibitem{cockburn2001runge}
{\sc B.~Cockburn and C.-W. Shu}, {\em {Runge--Kutta discontinuous Galerkin
  methods for convection-dominated problems}}, Journal of Scientific Computing,
  16 (2001), pp.~173--261.

\bibitem{CocliteGaravelloPiccoli2005aa}
{\sc G.~M. Coclite, M.~Garavello, and B.~Piccoli}, {\em Traffic flow on a road
  network}, SIAM J. Math. Anal., 36 (2005), pp.~1862--1886 (electronic).

\bibitem{ColomboGoatin2006aa}
{\sc R.~M. Colombo and P.~Goatin}, {\em Traffic flow models with phase
  transitions}, Flow, turbulence and combustion, 76 (2006), pp.~383--390.

\bibitem{CutoloPiccoliRarita2011}
{\sc A.~Cutolo, B.~Piccoli, and L.~Rarit{\`a}}, {\em An upwind-{E}uler scheme
  for an {ODE}-{PDE} model of supply chains}, SIAM J. Sci. Comput., 33 (2011),
  pp.~1669--1688.

\bibitem{MR2262939}
{\sc C.~D'apice, R.~Manzo, and B.~Piccoli}, {\em Packet flow on
  telecommunication networks}, SIAM J. Math. Anal., 38 (2006), pp.~717--740.

\bibitem{GaravelloPiccoli2006ab}
{\sc M.~Garavello and B.~Piccoli}, {\em Traffic flow on networks}, vol.~1 of
  AIMS Series on Applied Mathematics, American Institute of Mathematical
  Sciences (AIMS), Springfield, MO, 2006.

\bibitem{MR2566716}
\leavevmode\vrule height 2pt depth -1.6pt width 23pt, {\em Conservation laws on
  complex networks}, Ann. Inst. H. Poincar\'e Anal. Non Lin\'eaire, 26 (2009),
  pp.~1925--1951.

\bibitem{HertyKlar2003aa}
{\sc M.~Herty and A.~Klar}, {\em Modeling, simulation, and optimization of
  traffic flow networks}, SIAM J. Sci. Comput., 25 (2003), pp.~1066--1087.

\bibitem{HoldenRisebro1995aa}
{\sc H.~Holden and N.~H. Risebro}, {\em A mathematical model of traffic flow on
  a network of unidirectional roads}, SIAM J. Math. Anal., 26 (1995),
  pp.~999--1017.

\bibitem{LebacqueKhoshyaran2004aa}
{\sc J.-P. Lebacque and M.~Khoshyaran}, {\em First order macroscopic traffic
  flow models for networks in the context of dynamic assignment},
  Transportation Planning and Applied Optimization, 64 (2004), pp.~119--140.

\bibitem{LighthillWhitham1955aa}
{\sc M.~J. Lighthill and G.~B. Whitham}, {\em On kinematic waves. {II}. {A}
  theory of traffic flow on long crowded roads}, Proc. Roy. Soc. London. Ser.
  A., 229 (1955), pp.~317--345.

\bibitem{Richards1956aa}
{\sc P.~I. Richards}, {\em Shock waves on the highway}, Operations Research, 4
  (1956), pp.~42--51.

\bibitem{shu1988total}
{\sc C.-W. Shu}, {\em Total-variation-diminishing time discretizations}, SIAM
  Journal on Scientific and Statistical Computing, 9 (1988), pp.~1073--1084.

\bibitem{CanicStent2}
{\sc J.~Tamba\v{c}a, M.~Kosor, S.~\v{C}ani\'{c}, and D.~Paniagua}, {\em
  Mathematical modeling of endovascular stents}, SIAM J Appl Math, 70 (2010),
  pp.~1922--1952.

\bibitem{CanicStent1}
{\sc S.~\v{C}ani\'{c} and J.~Tamba\v{c}a}, {\em Cardiovascular stents as pde
  nets: 1d vs. 3d}, IMA J. Appl. Math., 77 (2012), pp.~748--770.

\bibitem{zhang2010maximum}
{\sc X.~Zhang and C.-W. Shu}, {\em On maximum-principle-satisfying high order
  schemes for scalar conservation laws}, Journal of Computational Physics, 229
  (2010), pp.~3091--3120.

\end{thebibliography}

\end{document}